\documentclass[12pt]{article}
\usepackage{amsfonts}
\usepackage{amssymb}
\usepackage{mathrsfs}
\usepackage{srcltx}
\usepackage{xcolor}
\textwidth 155mm \textheight 225mm \oddsidemargin 15pt
\evensidemargin 0pt \topmargin 0cm \headsep 0.3cm

\usepackage{amsmath}
\usepackage{amsthm}
\usepackage{amstext}
\usepackage{amsopn}

\usepackage{graphicx}
\usepackage{bm}
\newtheorem{theorem}{Theorem}[section]
\newtheorem{lemma}[theorem]{Lemma}

\newtheorem{corollary}[theorem]{Corollary}

\theoremstyle{definition}

\theoremstyle{remark}
\newtheorem{remark}[theorem]{Remark}

\numberwithin{equation}{section}

\newcommand{\ba}{\begin{array}}
\newcommand{\ea}{\end{array}}
\newcommand{\f}{\frac}

\newcommand{\ep}{\varepsilon}
\newcommand{\la}{\lambda}
\newcommand{\R}{{\mathbb R}}
\newcommand{\ds}{\displaystyle}

\begin{document}
\date{}
\title{ \bf\large{Spectral Monotonicity of Perturbed Quasi-positive Matrices with Applications in Population Dynamics}\thanks{S. Chen is supported by National Natural Science Foundation of China (No 11771109) and a grant from China Scholarship Council,  J. Shi is supported by US-NSF grants DMS-1715651 and DMS-1853598, and Z. Shuai is supported by US-NSF grant DMS-1716445. 
}}
\author{
Shanshan Chen\footnote{Email: chenss@hit.edu.cn}\\[-1mm]
{\small Department of Mathematics, Harbin Institute of Technology}\\[-2mm]
{\small Weihai, Shandong, 264209, P. R. China}\\[5mm]
Junping Shi\footnote{Corresponding Author, Email: jxshix@wm.edu}\\[-1mm]
{\small Department of Mathematics, William \& Mary}\\[-2mm]
{\small Williamsburg, Virginia, 23187-8795, USA}\\[5mm]
Zhisheng Shuai\footnote{Email: shuai@ucf.edu}\\[-1mm]
{\small Department of Mathematics, University of Central Florida}\\[-2mm]
{\small Orlando, Florida, 32816, USA}\\[5mm]
Yixiang Wu\footnote{Email: yixiang.wu@mtsu.edu} \\[-1mm]
{\small Department of Mathematics, Middle Tennessee State University}\\[-2mm]
{\small Murfreesboro, Tennessee, 37132, USA}
}

\maketitle

\newpage

\begin{abstract}
Threshold values in population dynamics can be formulated as spectral bounds of matrices, determining the dichotomy of population persistence and extinction. For a square matrix $\mu A + Q$, where $A$ is a quasi-positive matrix describing population dispersal among patches in a heterogeneous environment and $Q$ is a diagonal matrix encoding within-patch population dynamics, the monotonicy of its spectral bound with respect to dispersal speed/coupling strength/travel frequency $\mu$ is established via two methods. The first method is an analytic derivation utilizing a graph-theoretic approach based on Kirchhoff's Matrix-Tree Theorem; the second method employs Collatz-Wielandt formula from matrix theory and complex analysis arguments. It turns out that our established result is a slightly strengthen version of Karlin-Altenberg's Theorem, which has previously been discovered independently while investigating reduction principle in evolution biology and evolution dispersal in patchy landscapes. Nevertheless, our result provides a new and effective approach in stability analysis of complex biological systems in a heterogeneous environment. We illustrate this by applying our result to well-known ecological models of single species, predator-prey and competition, and an epidemiological model of susceptible-infected-susceptible (SIS) type. We successfully solve some open problems in the literature of population dynamics.   \\[2mm]
\noindent {\bf Keywords}: spectral bound, Laplacian matrix, population persistence, population extinction, basic reproduction number, global stability, Karlin's Theorem.\\[2mm]
\noindent {\bf MSC 2010}: 34D20, 92D25, 15A18, 34L15, 92D40.
\end{abstract}

\section{Introduction}
Various patch models have been proposed to investigate the impact of the environmental heterogeneity and the connectivity of subregions on the population dynamics. For example, the impact of the dispersal rate of susceptible and infected individuals among patches on the transmission of diseases has been studied in \cite{Allenpatch, arino2009diseases, arino2006disease, dhirasakdanon2007sharp, gao2019, gao2019fast, gao2012, LiPeng,li2010global,wang2004epidemic}; the evolution of dispersal in patchy environment has been shown to favor strategies resulting in ideal free distributions in \cite{CCDP2007,CCL2012,cantrell2017evolution}; and the persistence and extinction of predator and prey species in patchy environment have been considered in \cite{Freedman1989, holland2008strong,li2010global}.  

The movement pattern of individuals among $n$ subregions (or patches) can be described by a connectivity matrix $A=(a_{ij})_{n\times n}$, where $a_{ij}\ge 0$ ($i\neq j$) measures the movement  of individuals from patch $j$ to $i$ and $a_{ii}=-\ds\sum_{j\neq i} a_{ji}$ describes the total movement out from patch $i$. To study the effect of the connectivity of subregions on the population dynamics in a heterogeneous environment, one may consider a basic linear differential equation model
\begin{equation}\label{eq11}
\frac{d u_i(t)}{dt} = \mu \,\sum_{j=1}^n (a_{ij}u_j(t) - a_{ji}u_i(t))+ q_i u_i(t), \quad \quad i=1, \ldots, n,
\end{equation}
where $u_i(t)$ is the population size in the $i$-th patch, $\mu$ is the movement rate of individuals between patches and $q_i$ is the growth rate of the population in the $i$-patch. The growth or extinction of the population depends on the spectral bound $s(\mu A + Q)$ of matrix $\mu A + Q$, where $Q=\text{diag}(q_i)$ is a diagonal matrix. The dependence of the spectral bound $s(\mu A + Q)$ on the dispersal rate $\mu$ is of  significant importance in determining the population dynamics of the basic patch model \eqref{eq11} and other complex biological models based on it (e.g., those in Section~\ref{section-applications}).

Studies on the monotone dependence of $s(\mu A + Q)$ on $\mu$ started by Karlin \cite{karlin1982classifications}, and he  proved that $s(((1-\mu)I+\mu P)R)=s(\mu (P-I)R+R)$ is strictly decreasing in $\mu$ for $\mu\in (0, 1)$ unless $R$ is a multiple of the identity matrix $I$, where $P$ is a stochastic matrix and  $R$ is a positive diagonal matrix. Karlin's Theorem has been interpreted as the mathematical explanation of the \emph{reduction principle} \cite{altenberg2012resolvent, altenberg2017unified, feldman1980evolution} in evolutionary biology: greater mixing reduces growth. While studying the evolution of dispersal in patchy landscapes, Kirkland \emph{et al.} \cite{kirkland2006evolution}  independently discovered Karlin's Theorem with $P$ being a substochastic matrix.  More recently Altenberg \cite{altenberg2012resolvent} generalized Karlin's Theorem to linear operators on Banach spaces, and in context of matrix version, Altenberg's result actually showed that $s(\mu A + Q)$ is decreasing in $\mu$ when $A$ is quasi-positive with $s(A)\le 0$. Karlin's original proof in \cite{karlin1982classifications} utilizes the Donsker-Varadhan formula \cite{donsker1975variational} for principal eigenvalues of quasi-positive matrices, while Altenberg's proof in \cite{altenberg2012resolvent} relies on convex spectral functions due to Cohen \cite{cohen1981convexity}, Friedland \cite{friedland1981convex} and Kato \cite{kato1982superconvexity}. Kirkland \emph{et al.}'s proof \cite{kirkland2006evolution}  employs techniques from matrix analysis. 

In this paper, we provide two new different approaches to prove Karlin-Altenberg's Theorem, which also implies Karlin's original theorem.  Our first proof combines analytic method and a graph theory method  based on Kirchhoff's Matrix-Tree Theorem and Tree-Cycle Identity. The Tree-Cycle Identity has previously been used to construct Lyapunov functions for coupled systems of differential equations on a network \cite{guo2006global, guo2008graph, li2010global}, and here we show again  the Tree-Cycle Identity is an effective way to study the impact of network structure on the population dynamics. Specifically, we are able to show that $$\ds\frac{d}{d\mu} s(\mu A + Q)<0,\;\;\text{and}\;\;\ds\frac{d^2}{d\mu^2} s(\mu A + Q)>0$$ if $Q$ is not a multiple of $I$, where the result on  the strict positivity of the second derivative of $s(\mu A + Q)$  seems to be new. Our second proof to show that $s(\mu A + Q)$ is decreasing in $\mu$ uses only the  ``min-max" Collatz-Wielandt formula for the principal eigenvalue. We use a result from complex analysis to prove that $s(\mu A + Q)$ is either constant or strictly decreasing: the zeros of analytic functions are isolated; we also compute the limit of $s(\mu A + Q)$ as $\mu$ approaches zero or infinity.

The monotone dependence of $s(\mu A + Q)$ on $\mu$ can be applied to study the impact of the connectivity of subregions on the dynamics of  population models. In particular, we will consider the role of movement rate in single species model, predator-prey model, competition model, and epidemic SIS model in a multi-patch setting. Notably, for the competition model, we consider a situation that the two competing species are identical except for the movement rate, and we prove that the species with a slower dispersal rate will out-compete the one with faster dispersal rate. This result is in agreement with the corresponding reaction-diffusion model \cite{dockery1998evolution, hastings1983can}. For the epidemic SIS model, we prove the monotone dependence of the basic reproduction number on the movement rate of the infected population, which was addressed as an open problem in \cite{Allenpatch}.

Our paper is organized as follows. In Section 2, we list the terminology and state Karlin's Theorem.  In Sections 3 and 4, we give two different proofs of Karlin's Theorem. In Section 5, we will consider some population models from ecology and epidemiology and  study the impact of the movement rates of species between patches on the  population dynamics.


\section{Preliminary}
Let $A$ be an $n\times n$ matrix and let $\sigma(A)$ be the set of eigenvalues of $A$. Let $r(A)$ be the {\it spectral radius} of $A$, i.e.,
$$
r(A)=\max\{|\lambda|: \lambda\in\sigma(A)\}.
$$
Let $s(A)$ be the {\it spectral bound} (also called {\it spectral abscissa}) of $A$, i.e.,
$$
s(A)=\max\{{\rm Re} \lambda: \lambda\in\sigma(A)\}.
$$
A vector $u\gg0$ means that every entry of $u$ is positive. We use $I$ to denote the $n\times n$ identity matrix.

A square matrix is called  \emph{stochastic} if all the entries are nonnegative and every column adds up to $1$. Let $A=(a_{ij})$ be a square matrix; $A$ is called  an  \emph{$M$-matrix} if $a_{ij}\le 0$ for all $i\neq j$ and $A=sI-B$ with $B$ having all off-diagonal elements negative and $s\ge r(B)$; $A$ is called \emph{quasi-positive} (also called {\it Metzler matrix}) if $a_{ij}\ge 0$  for all $i\neq j$.
The connection between $M$-matrices and quasi-positive matrices can be viewed through the following well-known results: $-A$ is a non-singular (singular) $M$-matrix if and only if $A$ is quasi-positive with $s(A)<0$ ($s(A)=0$);  if $A$ is a singular $M$-matrix, then $A+D$ is a non-singular $M$-matrix for any  diagonal matrix $D=\text{diag}(d_i)$ with $d_i>0$ for all $i$.



A square matrix $L$ is called a \emph{Laplacian} matrix if all the off-diagonal entries are nonpositive and the sum of each column is zero. If $L$ is a Laplacian matrix, it is easy to see that $(1,1,\cdots,1)$ is a left eigenvector of $L$  corresponding to the eigenvalue $0$. In our applications to spatial population dynamics, the Laplacian matrix encodes all movement between patches and no population loss is assumed during the dispersal. For our purpose, a square matrix $L=(\ell_{ij})$ is called \emph{sub-Laplacian} if $\ell_{ij}\le 0$ for all $i\neq j$ and $\ell_{jj}\ge \ds -\sum_{i\ne j} \ell_{ij}$ for all $j$. A sub-Laplacian matrix $L$ is called \emph{strongly (strictly) sub-Laplacian} if $\ell_{jj}> \ds -\sum_{i\ne j} \ell_{ij}$ for all (some) $j$. Sub-Laplacian matrices defined above allow us to include possible population loss during the dispersal in our studies; see, for example, Section~\ref{section-single}.

In \cite{karlin1982classifications}, Karlin proved the following theorem on the monotonicity of the spectral radii of a family of  matrices, which was interpreted as the mathematical explanation of the \emph{reduction principle} \cite{altenberg2012resolvent, feldman1980evolution} in evolution biology. Karlin's proof relies on the Donsker-Varadhan formula for the principal eigenvalue. Later, this result has been discovered independently by Kirkland et al. \cite{kirkland2006evolution} (see also \cite{schreiber2009invasion}), and their proof is based on techniques of matrix analysis.

\begin{theorem}[Karlin's Theorem]\label{theorem_karlin}
Let $P$ be an irreducible stochastic matrix. Consider the family of matrices
$P_\mu=(1-\mu)I + \mu P$ with $0<\mu<1.$
Then for any diagonal matrix $R=\text{diag}(r_i)$ with $r_i>0$ for all $i$,
$r(P_\mu R)$ is strictly decreasing in $\mu$ provided that $R$ is not a multiple of $I$.
\end{theorem}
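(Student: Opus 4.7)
The plan is to deduce Karlin's Theorem from the stronger monotonicity statement for $s(\mu A + Q)$ that will be established in Sections~3--4, rather than attack it directly; as the introduction advertises, that general statement strengthens Karlin--Altenberg, so organizing the proof as a reduction is natural.

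The key algebraic observation is
\[
P_\mu R \;=\; \bigl((1-\mu)I + \mu P\bigr)R \;=\; R + \mu(P-I)R \;=\; \mu A + Q,
\]
with $A := (P-I)R$ and $Q := R$. With this choice, the off-diagonal entries of $A$ are $A_{ij}=p_{ij}r_j\ge 0$, so $A$ is quasi-positive; the column sums of $A$ vanish, so $-A$ is a Laplacian matrix in the terminology of Section~2 and in particular $s(A)=0$; $A$ inherits irreducibility from $P$, since right multiplication by the positive diagonal $R$ preserves the off-diagonal zero pattern; and $Q=R$ fails to be a scalar multiple of $I$ exactly under the hypothesis of the theorem. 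Because $P_\mu\ge 0$ and $R\gg 0$, the matrix $P_\mu R$ is itself nonnegative and irreducible, so Perron--Frobenius yields $r(P_\mu R) = s(P_\mu R) = s(\mu A + Q)$.

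The theorem then reduces to the assertion that $s(\mu A + Q)$ is strictly decreasing in $\mu$ whenever $A$ is quasi-positive and irreducible and $Q$ is a diagonal matrix not proportional to $I$. This is the Karlin--Altenberg-type monotonicity to be proved in Sections~3--4, and in our formulation the restriction $0<\mu<1$ becomes cosmetic: the conclusion persists for all $\mu\ge 0$.

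The only genuine content is of course this underlying monotonicity, which is where the main obstacle lies; establishing it will require either the graph-theoretic Matrix-Tree/Tree-Cycle derivative computation of Section~3 or the Collatz--Wielandt argument combined with an isolated-zeros-of-analytic-functions step of Section~4. The reduction displayed above is pure bookkeeping, and it is the only work needed once the general theorem is in hand.
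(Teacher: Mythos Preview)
Your reduction is correct and is exactly the approach taken in the paper: rewrite $P_\mu R = \mu A + Q$ with $A=(P-I)R$ quasi-positive, irreducible, $s(A)=0$, and $Q=R$, use Perron--Frobenius on the nonnegative matrix $P_\mu R$ to identify $r(P_\mu R)=s(P_\mu R)$, and then invoke Theorem~\ref{theorem_altenberg}. The only cosmetic difference is that the paper phrases the Laplacian observation in terms of $(I-P)R$ rather than $-A$.
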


Theorem \ref{theorem_karlin} has been applied to the following discrete time linear population model \cite{karlin1982classifications}:
\begin{equation}\label{dlinear}
x(t+1)=[(1-\mu)I+\mu P]Rx(t).
\end{equation}
Here, $x(t)$ is a vector-valued function denoting the frequency of each subdivision of some population (e.g., genotypes); $R$ is a diagonal matrix measuring the growth rate of each subdivision; stochastic matrix $P$ represents the pattern of dispersal; $\mu$ is the rate of dispersal (or mutation, mixing, etc.). The spectral radius $r(P_\mu R)$ measures the growth rate of the population. Biologically, Theorem  \ref{theorem_karlin} implies that the evolution of population favors a smaller rate of dispersal.

To view the connection between the spectral radius problem on \eqref{dlinear} and our spectral bound problem on \eqref{eq11}, we set $P_\mu R=\mu (P-I)R+R=\mu A+R$, where $A=(P-I)R$ is a quasi-positive matrix. The corresponding continuous-time version of model \eqref{dlinear} can be written as
$$
x'(t)=[\mu A  + Q] x(t),
$$
where $Q=R-I$ is a diagonal matrix representing the growth rate of each subdivision, but the diagonal entry $q_i=r_i-1$ of $Q$ is not necessarily positive. Since $A$ is quasi-positive, it generates a positive semigroup $Exp(tA)$, which measures the dispersal (or mutation, mixing, etc.) between subdivisions. The impact of dispersal rate $\mu$ has been shown in the following  Karlin-Altenberg's Theorem:

\begin{theorem}[Altenberg \cite{altenberg2010karlin, altenberg2012resolvent}]\label{theorem_altenberg}
Let $A$ be an irreducible quasi-positive matrix and let $Q$ be a diagonal matrix. Consider the family of matrices
$M(\mu)= \mu A + Q$ with $\mu>0.$
Then
\begin{enumerate}
    \item $s(M(\mu))$ is either a constant or strictly decreasing in $\mu\in (0, \infty)$ if $s(A)\le 0$. Moreover,
    \begin{equation*}
\ds\frac{d}{d\mu}s(M(\mu)) \le s(A),
\end{equation*}
and the equality
holds if and only if  $Q$ is  a multiple of $I$;
    \item $s(M(\mu))$ is convex in $\mu$, i.e. for any $0<\alpha<1$, and $\mu_1,
    \mu_2\ge 0$ with $\mu_1\neq \mu_2$,
$$
s((1-\alpha)M(\mu_1)+\alpha M(\mu_2))\le (1-\alpha)s(M(\mu_1))+\alpha r(M(\mu_2)),
$$
and the equality holds if and only if  $Q$ is  a multiple of $I$.
\end{enumerate}
\end{theorem}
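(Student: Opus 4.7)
The plan is to combine Perron--Frobenius theory, analytic eigenvalue perturbation, the known convexity of the spectral bound on quasi-positive matrices (due to Cohen, Friedland, and Kato), and an elementary asymptotic calculation at $\mu\to\infty$.

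First, since $A$ is irreducible and quasi-positive, so is $M(\mu)=\mu A+Q$ for every $\mu>0$. The Perron--Frobenius theorem for quasi-positive matrices then gives that $\lambda(\mu):=s(M(\mu))$ is a simple real eigenvalue with strictly positive right and left eigenvectors $v(\mu)\gg 0$ and $w(\mu)\gg 0$, which I normalize so that $w(\mu)^{T}v(\mu)=1$. By analytic perturbation theory the triple $(\lambda,v,w)$ depends real-analytically on $\mu$, and differentiating $M(\mu)v(\mu)=\lambda(\mu)v(\mu)$ and pairing with $w(\mu)^{T}$ yields the Hellmann--Feynman-type formula
\[
\lambda'(\mu)=w(\mu)^{T}Av(\mu).
\]

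Next I would invoke the Cohen--Friedland--Kato theorem that $B\mapsto s(B)$ is convex on the quasi-positive matrices; since $\mu\mapsto M(\mu)$ is affine, this immediately delivers the convexity statement (ii), and in particular $\lambda'$ is nondecreasing. Writing $\lambda(\mu)/\mu=s(A+Q/\mu)$ and using continuity of $s$ on irreducible quasi-positive matrices gives $\lambda(\mu)/\mu\to s(A)$ as $\mu\to\infty$; since $\lambda$ is convex, a standard fact (the asymptotic slope of a convex function equals the limit of its derivative) yields $\lim_{\mu\to\infty}\lambda'(\mu)=s(A)$. Monotonicity of $\lambda'$ then forces $\lambda'(\mu)\le s(A)$ for all $\mu>0$, which is the derivative bound in (i).

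The dichotomy ``$\lambda$ constant or strictly decreasing'' follows easily when $s(A)\le 0$: if $\lambda'(\mu_{0})=0$ for some $\mu_{0}$, the monotonicity of $\lambda'$ and the upper bound $\lambda'\le s(A)\le 0$ force $\lambda'\equiv 0$ on $[\mu_{0},\infty)$, and real-analyticity extends this to $(0,\infty)$, so $\lambda$ is constant; otherwise $\lambda'<0$ throughout and $\lambda$ is strictly decreasing. For the equality case $\lambda'\equiv s(A)$, the same analyticity-plus-monotonicity argument shows that $\lambda(\mu)=s(A)\mu+c$ globally; substituting into the eigenvalue equation rearranges to $\mu(A-s(A)I)v(\mu)=(cI-Q)v(\mu)$, and applying the equality case of the Collatz--Wielandt formula for $A$ with the positive vector $v(\mu)$ forces $v(\mu)$ to be a Perron eigenvector of $A$, which in turn forces $Q=cI$; the same argument pins down equality in the convexity statement.

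I expect the main difficulty to lie in the derivative inequality $\lambda'(\mu)\le s(A)$, which the plan above extracts (together with the asymptotic) from the nontrivial Cohen--Friedland--Kato convexity of $s$. An attractive self-contained alternative, which this paper pursues, is to expand $v(\mu)$ and $w(\mu)$ by Kirchhoff's Matrix-Tree Theorem as weighted sums over rooted spanning trees of an associated directed graph and then use the Tree-Cycle Identity to reorganize $w^{T}Av-s(A)w^{T}v$ into a manifestly nonpositive expression, with the equality case corresponding to a degeneration of the tree weights that forces $Q=cI$.
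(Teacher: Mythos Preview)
Your overall strategy is sound and is essentially Altenberg's original argument, which the paper cites but deliberately does \emph{not} reproduce. The paper instead supplies two new proofs. In the first (Section~3), after reducing to the Laplacian case $s(A)=0$ via the similarity $A\mapsto UAU^{-1}$ with $U$ built from the left Perron vector, it differentiates the eigenvalue relation, multiplies by suitable weights $\bar\alpha_i$ coming from Kirchhoff's Matrix--Tree Theorem, and applies the Tree--Cycle Identity to rewrite $(\lambda^*)'$ and $(\lambda^*)''$ as sums over directed cycles; the AM--GM inequality along each cycle yields $(\lambda^*)'\le 0$ and a telescoping square yields $(\lambda^*)''\ge 0$, with the equality analysis forcing all $q_i$ equal. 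In the second (Section~4), the paper constructs explicit Collatz--Wielandt test vectors to show $s(\mu A+Q)$ is nonincreasing, then uses analyticity to upgrade to the strict dichotomy, and computes the endpoint limits directly. What your approach buys is brevity, at the cost of importing the Cohen--Friedland--Kato convexity as a black box; what the paper's approaches buy is self-containment and, in the graph-theoretic proof, a strict second-derivative inequality that is slightly sharper than mere convexity.

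There is one genuine gap in your equality-case argument. From $\mu(A-s(A)I)v(\mu)=(cI-Q)v(\mu)$ you cannot invoke the equality case of Collatz--Wielandt to force $v(\mu)$ to be a Perron vector of $A$: the ratios $[Av(\mu)]_i/[v(\mu)]_i=s(A)+(c-q_i)/\mu$ need not be constant in $i$, so neither the min nor the max need equal $s(A)$. A clean repair is to first observe $\lambda(\mu)\to s(Q)=\max_i q_i$ as $\mu\to 0^+$, so $c=\max_i q_i$ and hence $c-q_i\ge 0$ for every $i$; then left-multiply by the left Perron vector $z\gg 0$ of $A$ (so $z^T(A-s(A)I)=0$) to obtain $0=\sum_i z_i(c-q_i)v_i(\mu)$, and positivity of each factor forces $q_i=c$ for all $i$. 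The same device (affinity on a subinterval $\Rightarrow$ constant derivative $\Rightarrow$ analytic continuation $\Rightarrow$ global affinity with slope $s(A)$) then handles the equality case in part~(ii) as you indicate.
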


Theorem \ref{theorem_altenberg} can be used to prove Theorem \ref{theorem_karlin} (see the proof at the end of this section). In \cite{altenberg2012resolvent}, Altenberg uses the strict convexity of $r(P_\mu R)$ by Friedland \cite{friedland1981convex} to show the strict monotonicity of  $r(P_\mu R)$. Alternatively, one may compute the limits $\ds\lim_{\mu\rightarrow 0} r(P_\mu R)$ and $\ds\lim_{\mu\rightarrow \infty} r(P_\mu R)$: these two limits do not equal if and only if $R$ is not a multiple of $I$, and therefore Theorem \ref{theorem_altenberg} implies the strict monotonicity of $r(P_\mu R)$. We will use this idea in the proof of Theorem  \ref{theorem_quasi}. We remark that the original statement of Theorem \ref{theorem_altenberg} in \cite{altenberg2012resolvent} are for operators on Banach spaces. Altenberg's proof is based on the convexity of the spectral radius due to Cohen \cite{cohen1981convexity} and Kato \cite{kato1982superconvexity}.

Finally we prove that Theorem \ref{theorem_altenberg} implies Theorem \ref{theorem_karlin}. In the next two sections, we give two proofs of strengthened versions of Theorem \ref{theorem_altenberg}, which also lead to new proofs of Theorem \ref{theorem_karlin}.
\begin{proof}[Proof of Theorem \ref{theorem_karlin} from Theorem \ref{theorem_altenberg}]
Since $P$ is an irreducible stochastic matrix, $P_{\mu}=(1-\mu)I + \mu P = I - \mu(I-P)$ is a nonnegative irreducible matrix. Hence, $P_{\mu}R = R - \mu(I-P)R$ is nonnegative as $R$ is positive. It follows from the Perron-Frobeneius theory that $r(P_{\mu}R) = s(P_{\mu}R)$. On the other hand, all off-diagonal entries of $(I-P)R$ are non-positive, and the sum of entries of each column of $(I-P)R$ is zero. Hence $A=-(I-P)R$ is a Laplacian matrix, and thus quasi-positive with $s(A)=0$. Notice that $A$ is also irreducible since $P$ is irreducible. Therefore, by Theorem \ref{theorem_altenberg}, $r(P_{\mu} R)= s(P_{\mu}R)$ is strictly decreasing in $z$ provided that $R$ is not a multiple of $I$.
\end{proof}

\section{A graph theoretical proof of Karlin's theorem}

In this section, we will use a graph theoretical approach to prove Karlin-Altenberg's Theorem, and the results we obtain on the convexity are slightly stronger than Theorem \ref{theorem_altenberg}. The terminology and results from graph theory  can be found in the appendix.

Let $L$ be a Laplacian matrix and let $Q$ be a diagonal matrix. If $L$ is irreducible, then $Q-\mu L$ is irreducible and, by the Perron-Frobenius Theorem,  $s(Q-\mu L)$ is the principal eigenvalue of $Q-\mu L$, which is simple and associated with a positive eigenvector.  To study the spectral bound $s(Q-\mu L)$, we consider the weighted directed graph  with $n$ vertices associated with the Laplacian matrix $L$.


\begin{theorem} \label{thm2.1}
Let $L$ be an irreducible Laplacian matrix, and let $Q=\textrm{diag}(q_i)$ be a diagonal matrix. Denote $M(\mu)=Q-\mu L$.
Then for any $\mu>0$,
\begin{enumerate}
\item [$(i)$]
\begin{equation}\label{first}
\ds\frac{d}{d\mu}s(M(\mu)) \le 0,
\end{equation}
and the equality
holds if and only if  $q_1=q_2=\cdots=q_n$;
\item[$(ii)$]
\begin{equation}\label{second}
\ds\frac{d^2}{d\mu^2}s(M(\mu)) \ge 0,
\end{equation}
and the equality
holds if and only if  $q_1=q_2=\cdots=q_n$.
\end{enumerate}
\end{theorem}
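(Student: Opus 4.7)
The plan is to combine analytic eigenvalue perturbation with a graph-theoretic identity based on Kirchhoff's Matrix-Tree Theorem and the Tree-Cycle Identity. First, since $L$ is irreducible, $M(\mu) = Q - \mu L$ is irreducible and quasi-positive for every $\mu > 0$; by Perron-Frobenius, $\lambda(\mu) := s(M(\mu))$ is simple and analytic in $\mu$, with strictly positive right and left Perron eigenvectors $v(\mu)$ and $w(\mu)$. Standard first-order eigenvalue perturbation then gives
\[
\lambda'(\mu) \;=\; -\,\frac{w(\mu)^T L\, v(\mu)}{w(\mu)^T v(\mu)},
\]
so part $(i)$ reduces to proving $w^T L v \ge 0$ with equality iff $Q$ is a multiple of $I$. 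Writing $a_{ij} := -L_{ij} \ge 0$ for $i \ne j$ (the weight of the directed edge $j\to i$ in the digraph $\mathcal{G}$ associated with $L$), the column-sum-zero property of $L$ yields by direct reindexing
\[
w^T L v \;=\; \sum_{i\ne j} a_{ji}\, v_i\,(w_i - w_j),
\]
which is not manifestly signed and therefore calls for a combinatorial unpacking of $v$ and $w$.

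Next I would encode $v$ and $w$ via a Matrix-Tree expansion. The matrix $\widetilde M(\mu) := \lambda(\mu)I - Q + \mu L$ has rank $n-1$ with $\widetilde M v = 0$ and $w^T \widetilde M = 0$, so its adjugate is rank-one: $\mathrm{adj}(\widetilde M) = \alpha\, v w^T$ for some nonzero scalar $\alpha$. Hence every entry $v_i w_j$ of this adjugate equals, up to a common scalar, a cofactor of $\widetilde M$. Although $\widetilde M$ is not itself a Laplacian (its column sums are the residues $\lambda - q_j$), each such cofactor admits a Kirchhoff-type expansion as a weighted sum over spanning in-rooted substructures of $\mathcal G$ in which the residues $q_j - \lambda$ appear as vertex leakage weights. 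Substituting these expansions into the displayed expression for $w^T L v$ and applying the Tree-Cycle Identity, the double edge sum collapses into a sum over spanning unicyclic subgraphs $\mathcal Q$ of $\mathcal G$ with contributions around each unique cycle $\mathcal C_{\mathcal Q}$; using the eigenvalue identities $\mu(Lv)_i = (q_i-\lambda)v_i$ and $\mu(L^T w)_j = (q_j-\lambda)w_j$ to resolve the cycle terms, $w^T L v$ reorganizes as a nonnegative combination of squared differences of the $q_i$'s weighted by positive spanning-tree weights. Nonnegativity is then manifest, and vanishing of the sum forces the $q_i$'s to agree along every cycle of $\mathcal G$, hence on all vertices by irreducibility.

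For part $(ii)$ I would reduce convexity to a monotonicity statement. Combining $w^T M(\mu) v = \lambda w^T v$ with the first-derivative formula yields the identity
\[
\bar q(\mu) \;:=\; \frac{\sum_i q_i w_i(\mu) v_i(\mu)}{\sum_i w_i(\mu) v_i(\mu)} \;=\; \lambda(\mu) - \mu\,\lambda'(\mu),
\]
so $\lambda''(\mu) = -\bar q'(\mu)/\mu$; convexity of $\lambda$ is thus equivalent to the monotonicity $\bar q'(\mu) \le 0$ of the $(w_i v_i)$-weighted average of the $q_i$'s, which the same Matrix-Tree/Tree-Cycle machinery, applied now to the normalized product weights $w_i v_i / \sum_k w_k v_k$, supplies together with the same equality condition. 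The principal obstacle is the combinatorial bookkeeping in the middle step: producing clean Matrix-Tree representations for the cofactors of the non-Laplacian $\widetilde M$, and then choosing test functions in the Tree-Cycle Identity so that the cycle contributions collapse, via the eigenvalue equations, into manifestly nonnegative squared $q$-differences. Once this combinatorial identity is in hand, both inequalities and both equality characterizations fall out simultaneously.
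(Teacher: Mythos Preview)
Your opening perturbation identity $\lambda'(\mu) = -\,w^T L v / w^T v$ is correct, as is the reduction $\bar q(\mu) = \lambda(\mu) - \mu\lambda'(\mu)$ in part $(ii)$. The problem is the central combinatorial step, which you yourself flag as the ``principal obstacle'' but do not actually carry out.

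Concretely, your plan is to expand the entries $v_i w_j$ as cofactors of the non-Laplacian matrix $\widetilde M = \lambda I - Q + \mu L$ via a generalized Matrix-Tree formula with ``vertex leakage weights'' $q_j - \lambda$, feed that into the Tree-Cycle Identity, and emerge with a nonnegative combination of \emph{squared differences of the $q_i$'s}. Two issues: first, no such expansion is produced, and the bookkeeping of leakage-weighted spanning forests is delicate enough that asserting it does not constitute a proof. Second, and more tellingly, the predicted output is wrong in form. In the paper's argument the first derivative does \emph{not} reduce to squared $q$-differences; the cycle contribution is $\sum_{(s,r)\in E(\mathcal C)} \bigl(1 - w_s/w_r\bigr)$, which is controlled by the AM--GM inequality (the product of the ratios around a cycle is $1$), not by any sum of squares. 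Squared differences appear only in part $(ii)$, and there they are squares of $w_r'/w_r - w_s'/w_s$, not of $q_r - q_s$.

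The paper's route avoids the non-Laplacian $\widetilde M$ altogether. It works with a single Perron eigenvector $w$ of $M^T$, differentiates the eigenvalue equation directly, and after an algebraic massage introduces the auxiliary \emph{honest} weighted digraph $\bar A$ with $\bar a_{ij} = a_{ij} w_i w_j$. The Tree-Cycle Identity is applied to the Laplacian of $\bar A$ (no leakage terms needed), and the cycle sums collapse via AM--GM for $(i)$ and via a telescoping-plus-squares identity for $(ii)$. The key idea you are missing is this construction of $\bar A$: it absorbs the eigenvector into the edge weights so that the Tree-Cycle machinery applies to a genuine Laplacian, sidestepping the cofactor expansion of $\widetilde M$ entirely.
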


\begin{proof}
By the Perron-Frobenius theorem, $s(M(\mu))$ is the principal eigenvlaue of $M=M(\mu)$. Denote $\lambda^*=s(M)=s(M^T)$, where $M^T$ is the transpose of $M$. Since $M^T$ is quasi-positive and irreducible, $\lambda^*$ is an eigenvalue of $M^T$ with corresponding eigenvector $w=(w_1,w_2,\ldots,w_n)^T$ with $w_j> 0$ for all $j$. Notice that $w_i$ and $\lambda^*$ depend smoothly on $\mu$.
Without loss of generality, we assume that
$\ds\sum_{i=1}^n w_i=1$ for any $\mu>0$, which implies that (here $'$ is the derivative with respect to $\mu$)
\begin{equation}\label{dzero}
\sum_{i=1}^n w_i'=0.
\end{equation}
Since $Q$ is diagonal, $M^T=Q-\mu L^T$. Hence, for each $i$,
\begin{equation}\label{sp1}
\lambda^* w_i = q_iw_i - \mu\sum_{j\not=i}a_{ji}w_i + \mu\sum_{j\not=i}a_{ji}w_j.
\end{equation}

We first prove $(i)$.  Differentiating \eqref{sp1} with respect to $\mu$ yields
\begin{equation}\label{sp2}
(\lambda^*)' w_i + \lambda^* w_i' = q_iw_i' - \sum_{j\not=i}a_{ji} w_i -\mu\sum_{j\not=i}a_{ji}w_i' + \sum_{j\not=i} a_{ji} w_j + \mu\sum_{j\not=i}a_{ji}w_j'.
\end{equation}
Multiplying  \eqref{sp2} by $w_i$ gives
\begin{equation}\label{sp3}
(\lambda^*)' w_i^2 + \lambda^* w_i'w_i = q_iw_i'w_i + \sum_{j\not=i}a_{ji} (w_j-w_i)w_i + \mu\sum_{j\not=i}a_{ji}(w_j'-w_i')w_i.
\end{equation}
By substituting  \eqref{sp1} into the second term of \eqref{sp3}, we obtain
\begin{equation}\label{sp4}
(\lambda^*)'w_i^2 = \sum_{j\not=i}a_{ji} w_jw_i\Big(1-\frac{w_i}{w_j}\Big) + \mu\sum_{j\not=i}a_{ji}w_jw_i\Big(\frac{w_j'}{w_j}-\frac{w_i'}{w_i}\Big).
\end{equation}
Now set $\bar{A}=(\bar{a}_{ij})_{n\times n}$ with $\bar{a}_{ij}=a_{ij}w_iw_j$ for $1\le i, j \le n$. Let $(\bar{\alpha}_1, \bar{\alpha}_2, \ldots, \bar{\alpha}_n)^T$ denote the positive, normalized principal right eigenvector of the Laplacian matrix corresponding to $\bar{A}$. Multiplying \eqref{sp4} by $\bar{\alpha}_i$ and summing these over all $i$ yield
\begin{equation}\label{sp5}
(\lambda^*)' \sum_{i=1}^n \bar{\alpha}_i w_i^2 = \sum_{i=1}^n \sum_{j\ne i} \bar{\alpha}_i \bar{a}_{ji}\Big[1-\frac{w_i}{w_j} + \mu\Big(\frac{w_j'}{w_j}-\frac{w_i'}{w_i}\Big)\Big].
\end{equation}
It follows from the Tree-Cycle identity (see Appendix) that
\begin{equation}\label{sp5b}
\begin{split}
&\sum_{i=1}^n \sum_{j\ne i} \bar{\alpha}_i \bar{a}_{ji}\Big[1-\frac{w_i}{w_j} + \mu\Big(\frac{w_j'}{w_j}-\frac{w_i'}{w_i}\Big)\Big]\\
=& \sum_{\mathcal{Q} \in \mathbb{Q}} w(\mathcal{Q}) \sum_{(s,r)\in E(\mathcal{C}_{\mathcal{Q}})} \Big[1-\frac{w_s}{w_r} + \mu\Big(\frac{w_r'}{w_r}-\frac{w_s'}{w_s}\Big)\Big],
\end{split}
\end{equation}
where $\mathbb{Q}$ is the set of all spanning unicycle graphs of $(\mathcal{G}, \bar{A})$, $w(\mathcal{Q})>0$ is the weight of $\mathcal{Q}$, and $\mathcal{C}_\mathcal{Q}$ denotes the directed cycle of $\mathcal{Q}$ with directed edge set $E(\mathcal{C}_{\mathcal{Q}})$. Along any directed cycle $\mathcal{C}_\mathcal{Q}$ of length $l$,
\begin{equation}\label{ineq}
\sum_{(s,r)\in E(\mathcal{C}_{\mathcal{Q}})} \Big(1-\frac{w_s}{w_r}\Big) = l - \Big(\sum_{(s,r)\in E(\mathcal{C}_{\mathcal{Q}})} \frac{w_s}{w_r}\Big) \le l - l\cdot \Big(\prod_{(s,r)\in E(\mathcal{C}_{\mathcal{Q}})} \frac{w_s}{w_r}\Big)^{1/l}=l-l\cdot 1 = 0.
\end{equation}
Here we use the AM-GM inequality $(x_1+x_2+\cdots+x_l)/l\ge \sqrt[l]{x_1x_2\cdots x_l}$
and
\begin{equation}\label{eq}
\sum_{(s,r)\in E(\mathcal{C}_{\mathcal{Q}})} \Big(\frac{w_r'}{w_r}-\frac{w_s'}{w_s}\Big) = 0.
\end{equation}
Combining \eqref{sp5}-\eqref{eq} yields $(\lambda^*)' \le 0$.
Notice that $(\lambda^*)'=0$ if and only if the equality holds in \eqref{ineq} for any directed cycle, that is, $w_r=w_s$ for any pair of $(s,r)$ locating in a directed cycle of $(\mathcal{G}, \bar{A})$. Since $\bar{A}$ is irreducible, $(\mathcal{G}, \bar{A})$ is strongly connected. As a consequence, $w_i=w_j$ for any $i,j$. Substituting these into \eqref{sp1} yields $\lambda^*=q_i$ for all $i$, which completes the proof of $(i)$.

Next we prove $(ii)$. In the following  $''$ is the second derivative with respect to $\mu$.
Differentiating \eqref{sp2} with respect to $\mu$ yields
\begin{equation}\label{sp22}
\begin{split}
&(\lambda^*)'' w_i + 2(\lambda^*)' w_i' +\lambda^* w_i''\\
=& q_iw_i'' - 2\sum_{j\not=i}a_{ji} w_i' -\mu\sum_{j\not=i}a_{ji}w_i'' + 2\sum_{j\not=i} a_{ji} w_j' + \mu\sum_{j\not=i}a_{ji}w_j''.
\end{split}
\end{equation}
Multiplying  \eqref{sp22} by $w_i$ gives
\begin{equation}\label{sp33}
\begin{split}
&(\lambda^*)'' w^2_i + 2(\lambda^*)' w_i'w_i +\lambda^* w_i''w_i\\
=& q_iw_iw_i'' - 2\sum_{j\not=i}a_{ji} w_i'w_i -\mu\sum_{j\not=i}a_{ji}w_i''w_i +2 \sum_{j\not=i} a_{ji} w_j'w_i + \mu\sum_{j\not=i}a_{ji}w_j''w_i.
\end{split}
\end{equation}
 Substituting \eqref{sp1} and \eqref{sp4} into \eqref{sp33},
we have
\begin{equation}\label{sp44}
\begin{split}
(\la^*)''w_i^2=&\mu\sum_{j\ne i}
a_{ji}w_jw_i\Big(\frac{w_j''}{w_j}-\frac{w_i''}{w_i}\Big)+2 \sum_{j\ne i} a_{ji}w_jw_i \Big(\frac{w_j'}{w_j}-\frac{w_i'}{w_i}\Big)\\
&-2\mu\sum_{j\ne i}a_{ji}w_jw_i\left[\frac{w_j'}{w_j}\frac{w_i'}{w_i}-\Big(\frac{w_i'}{w_i}\Big)^2\right].
\end{split}
\end{equation}
Recall $\bar{a}_{ij}=a_{ij}w_iw_j$. Multiplying \eqref{sp4} by $\bar{\alpha}_i$ and summing these over all $i$ yields
\begin{equation}\label{sp55}
\begin{split}
&(\la^*)''\sum_{i=1}^n\bar{\alpha}_iw_i^2\\
=&\sum_{i=1}^n\sum_{j\ne i}
\bar{\alpha}_i\bar{a}_{ji}\left[\mu\Big(\frac{w_j''}{w_j}-\frac{w_i''}{w_i}\Big)+2\Big(\frac{w_j'}{w_j}-\frac{w_i'}{w_i}\Big)-2\mu\left(\frac{w_j'}{w_j}\frac{w_i'}{w_i}-\Big(\frac{w_i'}{w_i}\Big)^2\right)\right]\end{split}
\end{equation}
It follows from the Tree-Cycle identity, \eqref{eq} and \eqref{eq} type equality for $w_j''/w_j$ that
\begin{equation}\label{sp5bb}
\begin{split}
&(\la^*)''\sum_{i=1}^n \bar\alpha_iw_i^2\\
=& \sum_{\mathcal{Q} \in \mathbb{Q}} w(\mathcal{Q}) \sum_{(s,r)\in E(\mathcal{C}_{\mathcal{Q}})} \left[\mu\Big(\frac{w_r''}{w_r}-\frac{w_s''}{w_s}\Big)+2\Big(\frac{w_r'}{w_r}-\frac{w_s'}{w_s}\Big)-2\mu\left(\frac{w_r'}{w_r}\frac{w_s'}{w_s}-\Big(\frac{w_s'}{w_s}\Big)^2\right)\right]\\
=& {\mu} \sum_{\mathcal{Q} \in \mathbb{Q}} w(\mathcal{Q}) \sum_{(s,r)\in E(\mathcal{C}_{\mathcal{Q}})}\left(\frac{w_r'}{w_r}-\frac{w_s'}{w_s}\right)^2 \ge 0.
\end{split}
\end{equation}
Notice that $(\lambda^*)''=0$ if and only if $\ds\frac{w'_r}{w_r}=\ds\frac{w_s'}{w_s}$ for any pair of $(s,r)$ locating in a directed cycle of $(\mathcal{G}, \bar{A})$. Since $\bar{A}$ is irreducible, the graph $(\mathcal{G}, \bar{A})$ is strongly connected. As a consequence, $\ds\frac{w_i'}{w_i}=\ds\frac{w_j'}{w_j}$ for any $i,j$. Therefore, $w_i'=kw_i$ for all $i$ for some $k\in\mathbb{R}$.
This, combined with \eqref{dzero} and $w_i>0$, implies that $w_i'=0$ for any $i=1,\dots,n$. Substituting $w_i'=0$ into
\eqref{sp2}, we have
\begin{equation*}
(\lambda^*)' w_i =- \sum_{j\not=i}a_{ji} w_i  + \sum_{j\not=i} a_{ji} w_j,
\end{equation*}
which implies that $(\lambda^*)'$ is the principal eigenvalue of $-L^T$ and therefore $(\lambda^*)'=0$.
From $(i)$ we see that $\la^*=q_i$ for all $i$,
and $(ii)$ holds.
\end{proof}

In Theorem \ref{thm2.1}, the column sum of the Laplacian matrix $L$ is zero which represents that the dispersal has no loss of population. A slightly stronger results hold for $\tilde{L}$ in which there is a loss of population when dispersing. Since Corollary \ref{cor2.2} follows directly from Theorem \ref{theorem_quasi0}, we do not prove it here.

\begin{corollary}\label{cor2.2}

Let $\tilde{L}=(\tilde{\ell}_{ij})_{n\times n}$ be an irreducible strictly sub-Laplacian matrix, let $Q=\textrm{diag}(q_i)$ be a diagonal matrix, and $\tilde{M}(\mu)=Q-\mu\tilde{L}$. Then for any $\mu>0$,
\begin{enumerate}
\item [$(i)$]
\begin{equation*}
\ds\frac{d}{d\mu}s(\tilde M(\mu)) < 0.
\end{equation*}
\item[$(ii)$]
\begin{equation*}
\ds\frac{d^2}{d\mu^2}s(\tilde M(\mu)) \ge 0,
\end{equation*}
and the equality holds if and only if $q_1=q_2=\dots=q_n$.
\end{enumerate}
\end{corollary}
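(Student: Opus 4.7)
The plan is to mimic the graph-theoretic proof of Theorem~\ref{thm2.1} while tracking the extra contribution arising from the nonzero column sums of $\tilde L$. Write $\tilde L = L_0 + D$ with $D = \textrm{diag}(c_i)$ and $c_i := \sum_{k=1}^n \tilde\ell_{ki} \ge 0$; by hypothesis $c_{i_0} > 0$ for some $i_0$, and $L_0$ is an irreducible Laplacian sharing its off-diagonal entries with $\tilde L$. Set $\tilde a_{ij} := -\tilde\ell_{ij}$ for $i \neq j$, let $\lambda^* = s(\tilde M(\mu))$, and let $w = (w_1,\dots,w_n)^T \gg 0$ be the Perron eigenvector of $\tilde M(\mu)^T$ normalized by $\sum_i w_i = 1$ (so $\sum_i w_i' = 0$, as in \eqref{dzero}). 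The eigenvalue identity then takes the form
\[
\lambda^* w_i \;=\; q_i w_i \;-\; \mu\, c_i\, w_i \;+\; \mu \sum_{j \neq i} \tilde a_{ji}(w_j - w_i), \qquad i = 1,\dots,n,
\]
which differs from \eqref{sp1} only by the defect term $-\mu c_i w_i$.

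For part $(i)$, I would differentiate once in $\mu$, multiply by $w_i$, and substitute the identity itself back to eliminate $\lambda^* w_i w_i'$. Repeating the algebra of \eqref{sp2}--\eqref{sp4} produces
\[
(\lambda^*)' w_i^2 \;=\; -\, c_i w_i^2 \;+\; \sum_{j \neq i} \bar a_{ji} \Bigl[\, 1 - \frac{w_i}{w_j} + \mu \Bigl(\frac{w_j'}{w_j} - \frac{w_i'}{w_i}\Bigr) \Bigr], \qquad \bar a_{ij} := \tilde a_{ij} w_i w_j.
\]
Weighting by the normalized positive right eigenvector $(\bar\alpha_i)$ of the Laplacian associated with $\bar A = (\bar a_{ij})$ (which is positive since $\bar A$ is irreducible whenever $\tilde L$ is) and summing in $i$, the Tree-Cycle Identity together with the AM-GM argument of \eqref{ineq}--\eqref{eq} turns the bracketed sum into a non-positive cycle sum, while the extra contribution $-\sum_i \bar\alpha_i c_i w_i^2$ is \emph{strictly} negative because $\bar\alpha_i, w_i > 0$ and $c_{i_0} > 0$. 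This delivers the strict inequality $(\lambda^*)' < 0$.

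For part $(ii)$, I would differentiate the eigenvalue identity twice, multiply by $w_i$, and substitute using both the original identity and the first-derivative identity just established. The fortuitous cancellation is that after the $-\mu c_i w_i w_i''$ piece cancels against the corresponding piece of the substitution $\lambda^* w_i w_i''$, the term $-\mu c_i w_i$ contributes exactly $-2 c_i w_i w_i'$ to the right-hand side, while the substitution for $2(\lambda^*)' w_i w_i'$ on the left (via the first-derivative identity of part $(i)$) supplies exactly $-2 c_i w_i w_i'$ as well, so the two $c_i$-contributions annihilate. What remains reproduces, verbatim, the chain \eqref{sp33}--\eqref{sp5bb} of Theorem~\ref{thm2.1}$(ii)$ with $a_{ij}$ replaced by $\tilde a_{ij}$, and the Tree-Cycle Identity together with the telescoping identities of \eqref{eq}-type along each cycle produces
\[
(\lambda^*)'' \sum_{i=1}^n \bar\alpha_i w_i^2 \;=\; \mu \sum_{\mathcal{Q} \in \mathbb{Q}} w(\mathcal{Q}) \sum_{(s,r) \in E(\mathcal{C}_{\mathcal{Q}})} \Bigl(\frac{w_r'}{w_r} - \frac{w_s'}{w_s}\Bigr)^{\!2} \;\ge\; 0.
\]
Equality forces $w_i'/w_i$ to be independent of $i$ on every directed cycle of $(\mathcal{G}, \bar A)$, and irreducibility of $\bar A$ combined with $\sum_i w_i' = 0$ forces $w_i' \equiv 0$; plugging $w_i' = 0$ back into the first-derivative identity yields $q_i = \lambda^* - \mu(\lambda^*)'$ for all $i$, so $q_1 = \cdots = q_n$.

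The only real obstacle is verifying the $c_i$-cancellation in the second-derivative step: it is pure algebraic bookkeeping, and it works because $-\mu c_i w_i$ is linear in $\mu$, so no higher-order $c_i$-contributions involving $w_i''$ survive the substitution of the first-derivative identity. A parallel route worth mentioning, which avoids rerunning the differentiation entirely for part $(i)$, is to check that $-\tilde L$ is irreducible and quasi-positive with $s(-\tilde L) < 0$ (test the Perron identity $-\tilde L^T v = s(-\tilde L) v$ against the all-ones vector and use $c_{i_0} > 0$), and then invoke a quasi-positive extension of Theorem~\ref{thm2.1} in the spirit of Theorem~\ref{theorem_altenberg}; but the direct route above has the advantage of simultaneously delivering $(ii)$.
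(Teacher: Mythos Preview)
Your direct graph-theoretic argument is correct: the defect term $-c_i w_i^2$ gives the strict inequality in $(i)$ exactly as you say, and the $c_i$-cancellation in the second-derivative step is genuine (after substituting both the eigenvalue identity and the first-derivative identity, the contributions $-\mu c_i w_i w_i''$, $-2c_i w_i w_i'$ on the right match those introduced on the left, leaving \eqref{sp44}--\eqref{sp5bb} verbatim with $\tilde a_{ij}$ in place of $a_{ij}$). The equality analysis in $(ii)$ is also right: $w_i'\equiv 0$ feeds back to give $\lambda^*-\mu(\lambda^*)'=q_i$ for every $i$.

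The paper, however, does not rerun this computation. It simply notes that the corollary follows from Theorem~\ref{theorem_quasi0}: since $-\tilde L$ is irreducible quasi-positive with $s(-\tilde L)<0$ (write $\tilde L = L_0 + D$ with $L_0$ Laplacian and $D\ge 0$ diagonal, $D\neq 0$, so $s(-\tilde L)<s(-L_0)=0$ by irreducibility), part~(i) of Theorem~\ref{theorem_quasi0} gives $\frac{d}{d\mu}s(\tilde M(\mu))\le s(-\tilde L)<0$ with no further work, and part~(ii) is immediate. This is precisely the ``parallel route'' you mention at the end of your proposal. Your direct approach is self-contained and makes the role of the column-sum defect transparent; the paper's route is shorter because Theorem~\ref{theorem_quasi0} has already absorbed the necessary generality.
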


Now we use the results in Theorem \ref{thm2.1} to prove the following version of Karlin-Altenberg Theorem (Theorem \ref{theorem_altenberg}).
\begin{theorem}\label{theorem_quasi0}
Let $A$ be a quasi-positive irreducible matrix  and let $Q=\text{diag}(q_i)$ be a diagonal matrix.  Then the following statements hold:
\begin{enumerate}
    \item[(i)]
    $$
    \frac{d}{d\mu} s(\mu A+ Q)\le s(A),
    $$
    and the equality holds if and only if  $q_1=q_2=\cdots=q_n$.
        \item[(ii)]
        $$
    \frac{d^2}{d\mu^2} s(\mu A+ Q)\ge 0,
    $$
    and the equality holds if and only if  $q_1=q_2=\cdots=q_n$.
\end{enumerate}
\end{theorem}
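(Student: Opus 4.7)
The plan is to reduce Theorem~\ref{theorem_quasi0} to the already-proven Theorem~\ref{thm2.1} by conjugating $A$ with a positive diagonal matrix $\Psi$ so that the columns of the new matrix all sum to the common value $\sigma := s(A)$. The observation driving this reduction is that such a similarity fixes every diagonal matrix $Q$, so it commutes with the whole construction $\mu A + Q$ and leaves the spectral bound $s(\mu A + Q)$ invariant, while turning $A$ into a shifted negative Laplacian to which Theorem~\ref{thm2.1} applies verbatim.

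To produce $\Psi$ I would take $\Psi = \text{diag}(\psi_1,\dots,\psi_n)$, where $\psi \gg 0$ is a positive left Perron eigenvector of $A$, i.e.\ $A^T\psi = s(A)\psi$; its existence is guaranteed by Perron-Frobenius applied to the nonnegative irreducible matrix $A + cI$ for any sufficiently large $c$. Setting $\tilde A := \Psi A \Psi^{-1}$, a one-line computation shows that $\tilde A$ is quasi-positive and irreducible, has the same diagonal as $A$, and satisfies $\mathbf{1}^T \tilde A = \psi^T A \Psi^{-1} = s(A)\mathbf{1}^T$, so every column of $\tilde A$ sums to $\sigma$. Because $\Psi Q \Psi^{-1} = Q$, the similarity yields $s(\mu A + Q) = s(\mu \tilde A + Q)$ for every $\mu$. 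I would then write $\tilde A = \sigma I - L$ with $L_{ij} = -\tilde a_{ij}$ for $i \ne j$ and $L_{jj} = \sigma - \tilde a_{jj}$; the column-sum property of $\tilde A$ forces every column of $L$ to sum to zero, so $L$ is an irreducible Laplacian (irreducibility being inherited from the off-diagonal zero pattern of $\tilde A$), and
\begin{equation*}
\mu A + Q \;\sim\; \mu \tilde A + Q \;=\; -\mu L + (Q + \mu\sigma I),
\end{equation*}
whence $s(\mu A + Q) = \mu\sigma + s(Q - \mu L)$.

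Differentiating once and twice in $\mu$ and feeding the last term into Theorem~\ref{thm2.1} applied to the irreducible Laplacian $L$ and the diagonal $Q$ delivers at once
$\frac{d}{d\mu} s(\mu A + Q) - \sigma \le 0$ and $\frac{d^2}{d\mu^2} s(\mu A + Q) \ge 0$,
with equality in each case iff $q_1 = \cdots = q_n$, which is exactly the claim. I do not foresee a serious obstacle, since this plan recasts the problem as a routine application of Theorem~\ref{thm2.1} after a cosmetic normalization. The only step that needs care is verifying that the positive diagonal conjugation by $\Psi$ simultaneously preserves quasi-positivity, irreducibility and the matrix $Q$, and that the Laplacian $L$ so produced is itself irreducible; all of these follow from the invariance of the off-diagonal zero pattern under positive diagonal similarity, so the bookkeeping is the hardest part.
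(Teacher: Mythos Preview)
Your argument is correct and matches the paper's own proof essentially line for line: both conjugate $A$ by the diagonal matrix built from a positive left Perron eigenvector, observe that this fixes $Q$ and turns $A$ into $\sigma I - L$ for an irreducible Laplacian $L$, and then invoke Theorem~\ref{thm2.1}. The only cosmetic difference is that the paper first treats the case $s(A)=0$ and then shifts $A$ by $s(A)I$ for the general case, whereas you carry the shift $\mu\sigma$ through directly; the content is identical.
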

\begin{proof} First we assume that $s(A)= 0$. Since  $A$ is an irreducible quasi-positive matrix, by Perron-Frobenius theorem, $A$ has a left principal eigenvector $u=(u_1, u_2, \cdots, u_n)^T \gg 0$ corresponding with eigenvalue $s(A)$. Denote $U=\text{diag}(u_i)$ and $L=-UAU^{-1}$. Since $s(A)=0$,  $L$ is a Laplacian matrix.
Indeed since $UAU^{-1}=(u_ia_{ij}u_j^{-1})$, the matrix $UAU^{-1}$ is quasi-positive and the sum of the $j$-th column is $u_j^{-1}\ds\sum_{i=1}^n u_i a_{ij}=s(A) u_j^{-1}  u_j=s(A)$. If $s(A)=0$, then the sum of each column of $UAU^{-1}$ is zero and $-UAU^{-1}$ is a Laplacian matrix. Since $s(\mu A+Q)=s(U(\mu A+ Q)U^{-1})=s(Q-\mu L)$, the results follow from Theorem \ref{thm2.1}.


If $s(A)\ne 0$, we replace $A$ by $A-s(A)I$ to obtain
$$
  \frac{d}{d\mu} s(\mu A+ Q)\le s(A) \ \ \text{and} \ \  \frac{d^2}{d\mu^2} s(\mu A+ Q)\ge 0,
$$
and the equality holds if and only if $q_1=q_2=\cdots=q_n$.
\end{proof}

The non-increasing property of the spectral bound of irreducible matrices as established in Theorem \ref{theorem_quasi0} also holds for reducible matrices.
\begin{corollary}\label{cor2.3}
Let $A$ be a quasi-positive  matrix with $s(A)\le 0$, and let $Q=\textrm{diag}(q_i)$ be a diagonal matrix. Then  $s(\mu A+Q)$ is non-increasing and convex  for all $\mu>0$.
\end{corollary}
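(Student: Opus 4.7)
The plan is to reduce the (possibly reducible) case to the irreducible case established in Theorem~\ref{theorem_quasi0} by passing to the Frobenius normal form. Specifically, I would choose a permutation matrix $P$ such that $PAP^{-1}$ is block upper triangular with diagonal blocks $A_{11},\ldots,A_{kk}$, each being either irreducible or $1\times 1$. Since $P$ is a permutation, $PQP^{-1}$ remains diagonal, with corresponding diagonal blocks $Q_{11},\ldots,Q_{kk}$, and each $A_{ii}$ inherits quasi-positivity from $A$.

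Because the spectrum of a block upper triangular matrix is the union of the spectra of its diagonal blocks, for every $\mu \ge 0$
\begin{equation*}
s(\mu A + Q) \;=\; \max_{1\le i \le k} s(\mu A_{ii} + Q_{ii}).
\end{equation*}
Taking $\mu = 0$ and using $s(A) \le 0$ yields $s(A_{ii}) \le s(A) \le 0$ for every $i$. Applying Theorem~\ref{theorem_quasi0} to each irreducible block (and handling $1\times 1$ blocks trivially, since in that case $s(\mu A_{ii} + Q_{ii})$ is affine in $\mu$ with slope $s(A_{ii})\le 0$) gives
\begin{equation*}
\frac{d}{d\mu} s(\mu A_{ii} + Q_{ii}) \le s(A_{ii}) \le 0, \qquad \frac{d^2}{d\mu^2} s(\mu A_{ii} + Q_{ii}) \ge 0,
\end{equation*}
so each $s(\mu A_{ii} + Q_{ii})$ is non-increasing and convex on $(0,\infty)$. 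Since the pointwise maximum of finitely many non-increasing convex functions is itself non-increasing and convex, the displayed identity then delivers both conclusions for $s(\mu A + Q)$.

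The only mild subtlety is verifying that the Frobenius normal form both preserves quasi-positivity on each diagonal block and keeps $Q$ diagonal; both facts are immediate because the change of basis is a permutation. An alternative would be to approximate $A$ by irreducible quasi-positive matrices $A_{\varepsilon} = A + \varepsilon(J - nI)$, where $J$ is the all-ones matrix, and pass to a limit using continuity of the spectral bound together with the fact that pointwise limits preserve monotonicity and convexity. However, ensuring the inequality $s(A_\varepsilon) \le 0$ uniformly in $\varepsilon$ requires an additional shift and a careful limit, so the Frobenius normal form route is more direct.
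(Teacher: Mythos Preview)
Your argument is correct and follows essentially the same route as the paper: pass to the Frobenius normal form by a permutation similarity, write $s(\mu A+Q)=\max_i s(\mu A_{ii}+Q_{ii})$, apply Theorem~\ref{theorem_quasi0} to each irreducible diagonal block (with $s(A_{ii})\le s(A)\le 0$), and conclude since the maximum of finitely many non-increasing convex functions is non-increasing and convex. Your separate treatment of $1\times 1$ blocks and your observation that $s(A_{ii})\le s(A)$ (from $\sigma(A)=\bigcup_i\sigma(A_{ii})$) make explicit two points the paper leaves implicit; the only cosmetic wrinkle is that the phrase ``Taking $\mu=0$'' does not literally yield $s(A_{ii})\le s(A)$ from your displayed identity---you mean to apply the same block-triangular decomposition to $A$ itself.
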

\begin{proof}
The eigenvalues and spectral bound of $A$ are invariant under permutation similarity transformation $
P^{-1}AP$ for a permutation matrix $P$. So without loss of generality, we can assume that $A$ is a block upper triangular matrix:
\begin{equation*}
    A=\begin{bmatrix}
  B_1 & * & \cdots & *  \\
  0& B_2 & \cdots & *\\
  \cdots& \cdots & \cdots &  \cdots\\
  0 &0   & \cdots & B_k\\
 \end{bmatrix}
\end{equation*}
where $B_i$ ($1\le i \le k$) are $p_i\times p_i$ irreducible quasi-positive matrices with size $p_i\ge 1$ and $\ds\sum_{i=1}^k p_i=n$. We also break $Q=\text{diag}(q_i)$ to match with the size of $A$: $Q=\textrm{diag}(Q_i)$, where $Q_i$ is a diagonal matrix of size $p_i\times p_i$. Apparently $s(\mu A+Q)=\max\{s(\mu B_i+Q_i):1\le i\le k\}$.

Since for each $i$, $s(\mu B_i+Q_i)$ is non-increasing and convex in $\mu$ from Theorem \ref{theorem_quasi0}, we conclude that $s(\mu A+Q)$ is also non-increasing and convex in $\mu$ as the maximum of a finite number of non-increasing and convex functions. Indeed we can have the strict inequality if either (i) $s(\mu A+Q)=s(\mu B_i+Q_i)$ for a fixed $1\le i\le k$ and $Q_i\ne c I_i$ for any $c\in \R$ and $I_i$ is the $p_i\times p_i$ identity matrix, or (ii) for each $1\le i\le k$, $Q_i\ne c I_i$ for any $c\in \R$. In either case, the strict inequality follows from Theorem \ref{theorem_quasi0}.
\end{proof}


One may suspect that the graph theoretical method can be used to show that the third derivative of $s(\mu A+ Q)$ is negative or positive. However, from following example, we can see that the third derivative may not be of one sign. Let
$$
L=
\begin{pmatrix}
\frac{1}{2} & -1 \\
-\frac{1}{2} & 1
\end{pmatrix}
\ \ \
\text{and}\ \ \
Q=
\begin{pmatrix}
1 & 0 \\
0 & 2
\end{pmatrix}.
$$
Then we can compute
$$
s(\mu):=s(Q-\mu L)= \frac{6-3\mu + \sqrt{9\mu^2-4\mu+4}}{4}.
$$
From elementary calculation,  we can see that $s(\mu)>0$, $s'(\mu)<0$ and $s''(\mu)>0$ for all $\mu>0$, which is in agreement with Theorem \ref{thm2.1}. But the third derivative $s^{(3)}(\mu)$ changes sign.

\section{A constructive proof of Karlin's theorem}\label{section-result}

In this section, we will use a different method to prove Karlin-Altenberg's Theorem (Theorem \ref{theorem_altenberg}).  Our proof is based on the ``min-max" Collatz-Wielandt formula:
$$
s(A)=\min_{u\gg0} \max_{1\le i\le n} \frac{[Au]_i}{[u]_i},$$
where $A=(a_{ij})_{n\times n}$ is a quasi-positive irreducible matrix. Our method to prove that  $s(\mu A+Q)$ is decreasing  in $\mu$ is elementary, and then we use a theory from complex analysis to prove that $s(\mu A+Q)$ is strictly decreasing: the zeros of analytic functions are isolated.

The following elementary algebra lemma is essential for the proof of  monotonicity of $s(\mu A+Q)$, which may be of independent interests.
\begin{lemma}\label{lemma_ess}
Let $\mu, \mu', u_i>0$, $i=1, 2, ..., n$. Suppose $u_i\neq u_j$, for all $i\neq j$, where $i, j=1, 2, ..., n$.
Then there exist $k_i>0$, $i=1, 2, ...n$, such that
\begin{equation}\label{ine}
\frac{u_j(\mu+\mu')}{\mu' u_i+\mu u_j}<\frac{k_i}{k_j}< \frac{\mu' u_j+\mu u_i}{u_i(\mu+\mu')}, \ \ \forall i\neq j, \ i, j=1, 2, ..., n.
\end{equation}
\end{lemma}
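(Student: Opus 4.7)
The plan is to exhibit the required $k_i$'s by a simple power-law ansatz. Set
\[
c := \frac{\mu'}{\mu+\mu'} \in (0,1), \qquad k_i := u_i^{-c}, \quad i = 1, \ldots, n.
\]
These are manifestly positive, and for any $i \neq j$ one has $k_i/k_j = (u_j/u_i)^c = r^c$, where I write $r := u_j/u_i$; by hypothesis $r > 0$ and $r \neq 1$. Substituting into \eqref{ine} and using the identities $c = \mu'/(\mu+\mu')$ and $1 - c = \mu/(\mu+\mu')$, the right-hand inequality becomes $r^c < cr + (1-c)$ directly, while the left-hand inequality, after dividing by $r^c$ and then by $\mu+\mu'$, becomes $r^{1-c} < (1-c)r + c$.

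Both reduced inequalities are instances of the classical tangent-line inequality
\[
x^a < ax + (1-a), \qquad x > 0, \ x \neq 1, \ a \in (0,1),
\]
which is the strict form of Bernoulli's inequality for fractional exponents and follows immediately from the strict concavity of $x \mapsto x^a$ (the graph lies strictly below its tangent line at $x = 1$ away from the point of tangency). Applying this inequality with $a = c$ and then with $a = 1-c$, both of which lie in $(0,1)$, and using $r \neq 1$, yields the two strict inequalities required by \eqref{ine}.

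The only real work in this argument is motivating the choice of $c$. I would arrive at it by computing the derivatives of the two bounds in \eqref{ine}, viewed as functions of $r$: both equal $1$ at $r = 1$, and a direct computation shows both have common slope $\mu'/(\mu+\mu')$ there. The power function $r \mapsto r^c$ with $c = \mu'/(\mu+\mu')$ is therefore tangent to both bounds at $r = 1$, so it automatically lies between them locally; the tangent-line inequality above is precisely what is needed to upgrade this local observation to the required global statement for all admissible $r$. No induction, compactness, or continuity argument appears necessary, and the construction of $k_i$ is uniform in the collection $\{u_1,\ldots,u_n\}$, depending only on the ratio $\mu'/(\mu+\mu')$.
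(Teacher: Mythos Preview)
Your proof is correct and considerably more direct than the paper's. The paper first orders the $u_i$ as $u_1<\cdots<u_n$, verifies that the two bounds in \eqref{ine} are compatible (i.e., the lower bound is strictly below the upper bound), and then constructs the $k_i$ recursively: set $k_1=1$ and, for each $i$, choose $k_{i+1}$ anywhere in the open interval dictated by \eqref{ine} for the consecutive pair $(i,i+1)$. The bulk of the paper's argument is an inductive ``transitivity'' step showing that once \eqref{ine} holds for all consecutive pairs, it holds for all pairs $(i,i+j)$; this requires multiplying the chain of consecutive inequalities and checking an auxiliary algebraic inequality at each step.

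Your approach sidesteps all of this by exhibiting the explicit closed form $k_i=u_i^{-c}$ with $c=\mu'/(\mu+\mu')$, reducing both inequalities in \eqref{ine} to the single concavity estimate $x^a<ax+(1-a)$ for $a\in(0,1)$, $x\neq1$. This is both shorter and more informative: it produces a canonical choice of $k_i$ (the paper's recursive choice is not unique), it requires no ordering of the $u_i$, and it makes transparent why the construction is uniform in $n$. The paper's inductive route, by contrast, might be easier to discover without the inspired ansatz, and its intermediate ``compatibility'' check (that the lower bound is below the upper bound) is an independently useful sanity observation---but once your power-law choice is on the table, that check becomes redundant, since Bernoulli's inequality delivers both bounds simultaneously.
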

\begin{proof}
Without loss of generality, we may assume $u_1<u_2\cdots<u_n$. Firstly, we show that \eqref{ine} makes sense, i.e.
\begin{equation}\label{ine1}
\frac{u_j(\mu+\mu')}{\mu' u_i+\mu u_j}<\frac{\mu' u_j+\mu u_i}{u_i(\mu+\mu')}.
\end{equation}
Eq. \eqref{ine1} equivalent to
$$
u_iu_j(\mu+\mu')^2< (\mu' u_i + \mu u_j)(\mu' u_j+\mu  u_i),
$$
which can be simplified as
$$
2u_iu_j< u^2_i+u^2_j.
$$
Since $u_i\neq u_j$, \eqref{ine1} is true.

Now we construct $k_1, k_2, ..., k_n$. Let $k_1=1$. We choose $k_{i}>0$, $i\ge 2$, recursively, such that
\begin{equation}\label{ine2}
\frac{u_i(\mu+\mu')}{\mu' u_{i+1}+\mu u_i}<\frac{k_{i+1}}{k_i}< \frac{\mu' u_i+\mu u_{i+1}}{u_{i+1}(\mu+\mu')}, \ \ i=1, 2, ..., n-1.
\end{equation}
We only need to prove that $k_i$,  $i=1, 2, ..., n$, satisfy \eqref{ine}.

We claim that
\begin{equation}\label{i2}
\frac{u_i(\mu+\mu')}{\mu' u_{i+2}+\mu u_i}<\frac{k_{i+2}}{k_i}< \frac{\mu' u_i+\mu u_{i+2}}{u_{i+2}(\mu+\mu')}, \ \ i=1, 2, ..., n-2.
\end{equation}
To see this, by \eqref{ine2},
\begin{equation}\label{ine3}
\frac{u_{i+1}(\mu+\mu')}{\mu' u_{i+2}+\mu u_{i+1}}<\frac{k_{i+2}}{k_{i+1}}< \frac{\mu' u_{i+1}+\mu u_{i+2}}{u_{i+2}(\mu+\mu')}.
\end{equation}
Multiplying \eqref{ine2} and \eqref{ine3}, we obtain
\begin{equation}\label{ine4}
\frac{u_i(\mu+\mu')}{\mu' u_{i+1}+\mu u_i} \frac{u_{i+1}(\mu+\mu')}{\mu' u_{i+2}+\mu u_{i+1}}<\frac{k_{i+2}}{k_{i}}<  \frac{\mu' u_i+\mu u_{i+1}}{u_{i+1}(\mu+\mu')} \frac{\mu' u_{i+1}+\mu u_{i+2}}{u_{i+2}(\mu+\mu')}.
\end{equation}
To show \eqref{i2}, it suffices to prove that
\begin{equation}\label{med}
\frac{u_i(\mu+\mu')}{\mu' u_{i+2}+\mu u_i}<\frac{u_i(\mu+\mu')}{\mu' u_{i+1}+\mu u_i} \frac{u_{i+1}(\mu+\mu')}{\mu' u_{i+2}+\mu u_{i+1}}
\end{equation}
and
$$
\frac{d' u_i+\mu u_{i+1}}{u_{i+1}(\mu+\mu')} \frac{\mu' u_{i+1}+\mu u_{i+2}}{u_{i+2}(\mu+\mu')}< \frac{\mu' u_i+\mu u_{i+1}}{u_{i+2}(\mu+\mu')}.
$$
These two inequalities can be checked directly. We only show \eqref{med}, as the other is similar. Eq. \eqref{med} is equivalent to
$$
(\mu' u_{i+1}+\mu u_i)(\mu' u_{i+2}+\mu u_{i+1})<    (\mu' u_{i+2}+\mu u_i)u_{i+1}(\mu+\mu'),
$$
which can be simplified as
$$
\mu' \mu (u_{i+1}^2+u_iu_{i+2})<\mu' \mu(u_{i+1}u_{i+2}+u_{i}u_{i+1}).
$$
This is equivalent to
$$
\mu' \mu (u_{i+1}-u_{i+2})(u_{i+1}-u_{i})<0,
$$
which holds as $u_{i}<u_{i+1}<u_{i+2}$. This proves \eqref{i2}.

By \eqref{ine2} and \eqref{i2}, we can show that
\begin{equation}\label{i3}
\frac{u_i(\mu+\mu')}{\mu' u_{i+3}+\mu u_i}<\frac{k_{i+3}}{k_i}< \frac{\mu' u_i+\mu u_{i+3}}{u_{i+3}(\mu+\mu')}, \ \ i=1, 2, ..., n-3.
\end{equation}
The proof of \eqref{i3} is similar to \eqref{i2}. Indeed, by \eqref{ine2}, we have
\begin{equation*}
\frac{u_{i+2}(\mu+\mu')}{\mu' u_{i+3}+\mu u_{i+2}}<\frac{k_{i+3}}{k_{i+2}}< \frac{\mu' u_{i+2}+\mu u_{i+3}}{u_{i+3}(\mu+\mu')}.
\end{equation*}
Multiplying this inequality with \eqref{i2}, we can show \eqref{i3}.  Then by induction, we can show that
\begin{equation}\label{ij}
\frac{u_i(\mu+\mu')}{\mu' u_{i+j}+\mu u_i}<\frac{k_{i+j}}{k_i}< \frac{\mu' u_i+\mu u_{i+j}}{u_{i+j}(\mu+\mu')}, \ \ i=1, 2, ..., n-j, \ \ j=1, 2, ..., n.
\end{equation}
This proves \eqref{ine}.
\end{proof}

In the following, we prove the Karlin-Altenberg's Theorem via several steps. First we show that  $s(\mu A+Q)$ is non-increasing.

\begin{lemma}\label{lemma_quasi}
Let $A=(a_{ij})_{n\times n}$ be a quasi-positive irreducible matrix such that $-A^T$ is sub-Laplacian, and let $Q=\text{diag}(q_i)$ be a diagonal matrix.  Then $s(\mu A+Q)$ is non-increasing in $\mu\in (0, \infty)$. If, in addition, $-A^T$ is strongly sub-Laplacian, then $s(\mu A+Q)$ is strictly decreasing in $\mu\in (0, \infty)$.
\end{lemma}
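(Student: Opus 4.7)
The plan is to apply the Collatz--Wielandt min-max formula
\[ s(B) = \min_{x\gg 0}\max_{1\le i\le n}\frac{(Bx)_i}{x_i} \]
to $B=M(\mu_2)$, using the Perron eigenvector of $M(\mu_1)$ together with the weights produced by Lemma~\ref{lemma_ess} to manufacture an explicit positive test vector. Fix $0<\mu_1<\mu_2$ and let $u=(u_1,\ldots,u_n)^T\gg 0$ be the Perron eigenvector of $M(\mu_1)$ associated with $\lambda_1 := s(M(\mu_1))$, so that
\[ \mu_1 \sum_{j\ne i} a_{ij}\,\frac{u_j}{u_i} = \lambda_1 - q_i - \mu_1 a_{ii},\qquad 1\le i\le n. \]

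Assuming for the moment that the $u_i$'s are pairwise distinct, I would invoke Lemma~\ref{lemma_ess} with $\mu=\mu_1$ and $\mu'=\mu_2-\mu_1$ (so $\mu+\mu'=\mu_2$) to obtain $k_1,\ldots,k_n>0$ satisfying, after swapping the roles of $i$ and $j$ in the upper bound, $\frac{k_j}{k_i} < \frac{(\mu_2-\mu_1)u_i+\mu_1 u_j}{\mu_2 u_j}$ for all $i\ne j$. Setting $v_i = k_i u_i$, this yields
\[ \frac{v_j}{v_i}=\frac{k_j}{k_i}\cdot\frac{u_j}{u_i} < \frac{(\mu_2-\mu_1)u_i+\mu_1 u_j}{\mu_2 u_i}, \]
i.e.\ $\mu_2\,\frac{v_j}{v_i} < (\mu_2-\mu_1) + \mu_1\,\frac{u_j}{u_i}$. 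Multiplying by $a_{ij}\ge 0$, summing over $j\ne i$, and substituting the eigenvalue identity displayed above would then give the key estimate
\[ \frac{[M(\mu_2) v]_i}{v_i} \;=\; q_i+\mu_2 a_{ii}+\mu_2\sum_{j\ne i} a_{ij}\,\frac{v_j}{v_i} \;\le\; \lambda_1 + (\mu_2-\mu_1)\sum_{j=1}^n a_{ij}. \]

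Now I would use the hypothesis on $A$. Unpacking the definition, $-A^T$ being sub-Laplacian is exactly the statement that every row sum $\sum_j a_{ij}$ of $A$ is $\le 0$, while ``strongly sub-Laplacian'' upgrades this to a strict negativity of every row sum. In the sub-Laplacian case the displayed estimate gives $[M(\mu_2)v]_i/v_i \le \lambda_1$ for every $i$, and Collatz--Wielandt yields $s(M(\mu_2))\le \lambda_1 = s(M(\mu_1))$. In the strongly sub-Laplacian case there is a uniform $\delta>0$ with $\sum_j a_{ij}\le -\delta$ for all $i$, and so $s(M(\mu_2))\le \lambda_1 - (\mu_2-\mu_1)\delta < \lambda_1$, giving the strict version.

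The one real obstacle is that Lemma~\ref{lemma_ess} assumes the $u_i$'s to be pairwise distinct, and a priori the Perron eigenvector of $M(\mu_1)$ need not be. I would handle this by a partition/limit argument: group the indices into blocks on which $u_i$ takes a common value, set $k_i$ constant on each block, and apply Lemma~\ref{lemma_ess} to the reduced system indexed by the distinct values (the within-block constraints collapse to equalities, which is harmless). Equivalently, perturbing $Q$ by a generic small diagonal $\varepsilon D$ makes the Perron eigenvector of $\mu_1 A + Q + \varepsilon D$ have distinct components for small $\varepsilon>0$; the argument above applies, and the conclusion survives the limit $\varepsilon\to 0^+$ by continuity of the Perron eigenvalue. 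In the strongly sub-Laplacian subcase the margin $(\mu_2-\mu_1)\delta$ is uniform in $\varepsilon$, so strict monotonicity is preserved in the limit as well.
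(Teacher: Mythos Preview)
Your proposal is correct and follows essentially the same route as the paper: both take the Perron eigenvector $u$ of $M(\mu_1)$, build the test vector $v_i=k_iu_i$ with the weights $k_i$ supplied by Lemma~\ref{lemma_ess}, and feed $v$ into the Collatz--Wielandt formula, using the row-sum condition encoded in ``$-A^T$ sub-Laplacian'' to close the estimate. Your intermediate bound $[M(\mu_2)v]_i/v_i\le \lambda_1+(\mu_2-\mu_1)\sum_j a_{ij}$ is a slightly cleaner packaging of the paper's chain \eqref{lambda2}--\eqref{kkk}, and your block/perturbation treatment of repeated $u_i$'s is exactly what the paper abbreviates as ``if $u_i=u_j$, we may set $k_i=k_j$.''
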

\begin{proof}
By the Perron-Frobenius theorem, $A_\mu:=\mu A+Q$ has a positive eigenvector $u=(u_1, u_2, ..., u_n)^T$   corresponding with eigenvalue $\lambda_1=s(A_\mu)$. Then, we have
\begin{equation}\label{lambda1}
q_iu_i+\mu a_{i1}u_1+\mu a_{i2}u_2\cdots+\mu a_{in}u_n=\lambda_1 u_i, \ \ i=1, 2, ..., n.
\end{equation}
For any $\mu'>0$, to show $s(A_{\mu+\mu'})\le s(A_{\mu})$, by the Collatz-Wielandt formula, it suffices to find a strictly positive vector $v=(v_1, v_2, \cdots, v_n)^T$ such that
\begin{equation}\label{vv}
 \max_{1\le i \le n} \frac{[A_{\mu+\mu'} v]_i}{[v]_i}\le \lambda_1.
\end{equation}
Suppose $v_i=k_i u_i$  for some $k_i>0$, $i=1, 2, ..., n$. We need to find $k_i$ satisfying \eqref{vv}, i.e.
\begin{equation}\label{lambda2}
\frac{q_ik_iu_i+(\mu+\mu')a_{i1}k_1u_1+(\mu+\mu')a_{i2}k_2u_2\cdots+(\mu+\mu')a_{in}k_nu_n}{k_i u_i}\le \lambda_1, \ i=1, 2, ..., n.
\end{equation}
Solving $\lambda_1$ from \eqref{lambda1} and plugging it into \eqref{lambda2}, \eqref{lambda2} is equivalent to
\begin{equation}\label{lambda3}
a_{ii} \mu' k_i u_i\le  \sum_{j\neq i}a_{ij}u_j(\mu k_i-(\mu+\mu')k_j),\ \forall i=1, 2,  ..., n,
\end{equation}
Since $|a_{ii}|\ge \sum_{j\neq i} a_{ij}$ as $-A^T$ is sub-Laplacian,  \eqref{lambda3} holds if
$$
\mu' k_i u_i\ge  u_j((\mu+\mu')k_j-\mu k_i), \ \forall j\neq i,
$$
 which is equivalent to
\begin{equation}\label{kkk}
\frac{k_i}{k_j}\ge  \frac{u_j(\mu+\mu')}{\mu' u_i+\mu u_j}, \ \forall j\neq i.
\end{equation}
By Lemma \ref{lemma_ess}, we can find $k_i$ satisfying \eqref{kkk} (if $u_i=u_j$, we may set $k_i=k_j$).  This proves  $r(A_{\mu+\mu'})\le r(A_\mu)$.

If $-A^T$ is strongly sub-Laplacian, then the inequality \eqref{lambda2} is strict and $r(A_\mu)$ is strictly decreasing.
\end{proof}

Next, we show that $s(\mu A+Q)$ is analytic in $\mu$. Since the zeros of analytic functions are isolated and $s(\mu A+Q)$ is decreasing, $s(\mu A+Q)$ is either strictly decreasing or constant in $\mu$.

\begin{lemma}\label{lemma_quasi2}
Let $A=(a_{ij})_{n\times n}$ be a quasi-positive irreducible matrix such that $-A^T$ is sub-Laplacian, and $Q=\text{diag}(q_i)$ be a diagonal matrix.  Then $s(\mu A+Q)$ is either strictly decreasing or constant in $\mu\in (0, \infty)$.
\end{lemma}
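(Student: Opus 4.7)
The plan is to upgrade the non-increasing conclusion of Lemma \ref{lemma_quasi} to the stated dichotomy by establishing that $\phi(\mu) := s(\mu A + Q)$ is real-analytic on $(0, \infty)$ and then invoking the identity theorem for analytic functions. The structure is: analyticity $+$ monotonicity $+$ identity theorem $\Rightarrow$ constant-or-strictly-decreasing.

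For each $\mu > 0$, the matrix $\mu A + Q$ is quasi-positive and irreducible, so by Perron-Frobenius $\phi(\mu)$ is a simple eigenvalue, i.e., a simple root of the characteristic polynomial $p(\lambda, \mu) := \det(\lambda I - \mu A - Q)$. Since $p$ is a polynomial in $(\lambda, \mu)$ and $\partial_\lambda p(\phi(\mu_0), \mu_0) \ne 0$ at any fixed $\mu_0 \in (0, \infty)$ by simplicity, the analytic implicit function theorem produces a complex-analytic function $\tilde\phi$ on a neighborhood of $\mu_0$ satisfying $\tilde\phi(\mu_0) = \phi(\mu_0)$ and $p(\tilde\phi(\mu), \mu) = 0$. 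A short continuity argument identifies $\tilde\phi$ with $\phi$ on a real neighborhood of $\mu_0$: the isolated simple eigenvalue varies continuously under the polynomial perturbation $\mu A + Q$, and $\phi(\mu)$ is characterized for nearby $\mu$ as the unique largest real eigenvalue of the quasi-positive irreducible matrix $\mu A + Q$. Hence $\phi$ is real-analytic on the whole interval $(0, \infty)$.

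With analyticity in hand, suppose $\phi$ is not strictly decreasing. By Lemma \ref{lemma_quasi} there exist $0 < \mu_1 < \mu_2$ with $\phi(\mu_1) = \phi(\mu_2)$, and non-increasing monotonicity then forces $\phi \equiv \phi(\mu_1)$ on the whole interval $[\mu_1, \mu_2]$. The real-analytic function $\phi - \phi(\mu_1)$ thus vanishes on a set with an accumulation point in the connected domain $(0, \infty)$, so the identity theorem forces $\phi - \phi(\mu_1) \equiv 0$. Therefore $\phi$ is constant on $(0, \infty)$, and the dichotomy is established.

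The main obstacle is establishing analyticity cleanly; the identity-theorem step is essentially automatic. One route is to cite Kato's analytic perturbation theory (where one represents the spectral projection associated with the isolated simple eigenvalue $\phi(\mu)$ via a Dunford contour integral, giving analyticity of both the projection and the eigenvalue). A more self-contained alternative is the implicit function theorem applied directly to $p(\lambda, \mu) = 0$ at a simple root, coupled with the characterization of $\phi(\mu)$ as the largest real eigenvalue to pin down which local branch is selected. Either approach handles all the work; the rest of the proof is a two-line application of the identity theorem.
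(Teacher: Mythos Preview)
Your proposal is correct and follows essentially the same route as the paper: Perron--Frobenius gives simplicity of $s(\mu A+Q)$, the analytic implicit function theorem applied to the characteristic polynomial gives real-analyticity of $\phi$, and the identity theorem (equivalently, isolated zeros of analytic functions) combined with the non-increasing conclusion of Lemma~\ref{lemma_quasi} yields the dichotomy. Your write-up is more careful than the paper's about identifying the implicit-function branch with $\phi$ and spelling out the final step, but the underlying argument is the same.
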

\begin{proof}
By the Perron-Frobenius theorem, $s^*(\mu):=s(\mu A+Q)$ is a simple root  of some polynomial equation $F(\mu, s)=0$ for each $\mu\in (0, \infty)$. Therefore,
$$
\frac{\partial F}{\partial s} (\mu, s^*)\neq 0 \text{ for all } \ \mu\in (0, \infty).
$$
By the implicit function theorem, $s^*(\mu)$ is analytic in $\mu$ (we may extend the domain of $\mu$ and $s$ to the complex plane). Since the zeros of analytic functions are isolated and $s(\mu A+Q)$ is decreasing by Lemma \ref{lemma_quasi}, $s(\mu A+Q)$ is either strictly decreasing or constant in $\mu$.
\end{proof}

Now we are ready to establish monotonicity of the spectral bound $s(\mu A+Q)$.

\begin{theorem}\label{theorem_quasi}
Let $A=(a_{ij})_{n\times n}$ be a quasi-positive irreducible matrix and let $Q=\text{diag}(q_i)$ be a diagonal matrix.   Then the following results hold:
\begin{enumerate}
\item If $s(A)<0$, then $s(\mu A+Q)$ is strictly decreasing in $\mu\in (0, \infty)$. Moreover   $$\lim_{\mu\rightarrow 0}s(\mu A+Q)=\max_{1\le i\le n}\{q_i\} \text{  and }\; \lim_{\mu\rightarrow\infty}s(\mu A+Q)=-\infty.$$
\item If $s(A)=0$, then $s(\mu A+Q)$ is strictly decreasing provided that $Q$ is not a multiple of $I$. Moreover,
$$
\lim_{\mu\rightarrow 0}s(\mu A+Q)=\max_{1\le i\le n}\{q_i\} \;\;
\text{and} \;\; \lim_{\mu\rightarrow\infty}s(\mu A+Q)=\sum_{i=1}^n{v_iq_i},
$$
where $v_i\in (0, 1)$ for each $1\le i\le n$ is determined by $A$ and satisfies $\ds\sum_{i=1}^n{v_i}=1$ (if $A$ has each row sum equaling zero, then $v$ is a left positive eigenvector of $A$).
\end{enumerate}
\end{theorem}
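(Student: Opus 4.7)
My plan is to reduce Theorem \ref{theorem_quasi} to the sub-Laplacian setting covered by Lemmas \ref{lemma_quasi} and \ref{lemma_quasi2} via a diagonal similarity, and then compute the limits at $\mu = 0^+$ and $\mu = \infty$ explicitly, which both identify the limiting values claimed in the statement and rule out the constant alternative in the case $s(A) = 0$.

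\emph{Reduction via similarity.} Since $A$ is quasi-positive and irreducible, the Perron-Frobenius theorem supplies a right Perron eigenvector $u \gg 0$ with $A u = s(A) u$. Setting $U = \text{diag}(u_1, \ldots, u_n)$ and $B = U^{-1} A U$, the diagonality of $Q$ gives $s(\mu A + Q) = s(\mu B + Q)$. Straightforward computation yields that each row sum of $B$ equals $s(A)$, $B$ is quasi-positive, and $B$ is irreducible. Consequently $-B^T$ has non-positive off-diagonal entries and every column sum equal to $-s(A) \ge 0$, so $-B^T$ is sub-Laplacian whenever $s(A) \le 0$, and strongly sub-Laplacian whenever $s(A) < 0$. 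Applying Lemma \ref{lemma_quasi} to $B$ then gives strict decrease of $s(\mu A + Q)$ in case (i), and in case (ii) gives only non-increase, which Lemma \ref{lemma_quasi2} sharpens to the dichotomy ``strictly decreasing or constant'' on $(0, \infty)$.

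\emph{Evaluating the limits.} Continuity of the spectral bound gives $\lim_{\mu \to 0^+} s(\mu A + Q) = s(Q) = \max_i q_i$ in both cases. For $\mu \to \infty$ in case (i), writing $s(\mu A + Q) = \mu \cdot s(A + \mu^{-1} Q)$ and using continuity together with $s(A) < 0$ forces $s(\mu A + Q) \to -\infty$. For case (ii), let $w \gg 0$ be the left Perron eigenvector of $A$ (so $w^T A = 0$) and let $u(\mu) \gg 0$ be the right Perron eigenvector of $\mu A + Q$. Multiplying $(\mu A + Q) u(\mu) = s(\mu A + Q) u(\mu)$ on the left by $w^T$ kills the $A$-term and yields
\begin{equation*}
s(\mu A + Q) = \frac{w^T Q \, u(\mu)}{w^T u(\mu)}.
\end{equation*}
Dividing the eigenvalue equation by $\mu$ shows that $u(\mu)$ (under a fixed normalization) is an eigenvector of $A + \mu^{-1} Q$ for the eigenvalue $\mu^{-1} s(\mu A + Q)$; simplicity of $s(A) = 0$ together with standard continuity of simple eigenvectors forces $u(\mu) \to u$ as $\mu \to \infty$. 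Passing to the limit then gives $\lim_{\mu \to \infty} s(\mu A + Q) = (w^T Q u)/(w^T u) = \sum_i v_i q_i$, with $v_i := w_i u_i / \sum_j w_j u_j \in (0, 1)$ and $\sum_i v_i = 1$. When the row sums of $A$ vanish one may take $u = \mathbf{1}$, so $v$ reduces to the normalized left eigenvector of $A$, as asserted.

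\emph{Ruling out constancy in case (ii).} If $Q$ is not a multiple of $I$, then the $q_i$ are not all equal, and since $v_i \in (0,1)$ and $\sum_i v_i = 1$ one has $\max_i q_i > \sum_i v_i q_i$ strictly. The two limits differ, so the constant alternative in the Lemma \ref{lemma_quasi2} dichotomy is impossible, and strict monotonicity follows. The main obstacle I expect is the $\mu \to \infty$ limit in case (ii), where $s(\mu A + Q) \to 0$ after rescaling and the product $\mu \cdot \mu^{-1} s(\mu A + Q)$ is indeterminate; here one must both establish convergence of $u(\mu)$ to the Perron eigenvector of $A$ (using simplicity of $s(A) = 0$) and correctly identify the limiting weighted average through the formula $s(\mu A + Q) = (w^T Q u(\mu))/(w^T u(\mu))$, which cleverly sidesteps the $\mu \to \infty$ indeterminacy.
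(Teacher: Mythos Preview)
Your proposal is correct and follows essentially the same route as the paper: conjugate by the diagonal of a Perron eigenvector to land in the (strongly) sub-Laplacian setting of Lemmas~\ref{lemma_quasi}--\ref{lemma_quasi2}, compute the two limits, and use the gap between them to exclude constancy in case~(ii). The only noteworthy difference is in the $\mu\to\infty$ limit for case~(ii): the paper passes to the conjugated matrix $\tilde A=U^{-1}AU$ (which has zero row sums), works with the \emph{left} Perron eigenvector $v_\mu$ of $\mu\tilde A+Q$, and obtains the exact identity $s(\mu\tilde A+Q)=\sum_i v_{\mu i}q_i$ by right-multiplying the eigenvector equation by $\mathbf 1$; a compactness/subsequence argument then identifies the limit. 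You instead stay with $A$, pair the right Perron eigenvector $u(\mu)$ with the fixed left Perron vector $w$ of $A$ to get $s(\mu A+Q)=(w^T Q\,u(\mu))/(w^T u(\mu))$, and invoke continuity of simple eigenvectors to pass to the limit. Both arguments produce the same weights $v_i=w_iu_i/\sum_j w_ju_j$ and are equally valid; your identity has the minor bonus of exhibiting $s(\mu A+Q)$ as a bona fide weighted average of the $q_i$ for every $\mu$, which immediately gives the two-sided bound $\min_i q_i\le s(\mu A+Q)\le \max_i q_i$ needed to justify $\mu^{-1}s(\mu A+Q)\to 0$.
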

\begin{proof}
It is easy to see that $\ds\lim_{\mu\rightarrow 0}s(\mu A+Q)=\max_{1\le i\le n}\{q_i\}$. Let $u=(u_1, u_2, ..., u_n)^T$  be the positive eigenvector of $A$ corresponding to $s(A)$ satisfying $\ds\sum_{i=1}^n{u_i}=1$ and let $U=\text{diag}(u_i)$.

If  $s(A)<0$, then $\tilde A:=U^{-1}AU$ is quasi-positive, and $-\tilde A^T$ is strongly sub-Laplacian. By Lemma \ref{lemma_quasi}, $s(\mu A+Q)=s(\mu \tilde A+Q)$ is strictly decreasing. Since
\begin{equation}\label{eigb}
\min_{1\le i\le n}\left\{ \mu\sum_{j=1}^n a_{ij}+q_i \right\} \le s(\mu A+Q)\le  \max_{1\le i\le n}\left\{ \mu\sum_{j=1}^n a_{ij}+q_i\right \},
\end{equation}
and $\ds\sum_{j=1}^n a_{ij}<0$ for each $i$, we have $\ds\lim_{\mu\rightarrow\infty}s(\mu A+Q)=-\infty$.

If $s(A)=0$, then $\tilde A$ is quasi-positive and $-\tilde A^T$ is Laplacian.  By Lemma \ref{lemma_quasi2}, $s(\mu A+Q)=s(\mu \tilde A+Q)$ is strictly decreasing or constant in $\mu$.  By \eqref{eigb}, $s(\mu
\tilde A+Q)$ is bounded below by $\ds\min_{1\le i\le n}\{q_i\}$ and above by $\ds\max_{1\le i\le n}\{q_i\}$. Therefore, $\ds\lim_{\mu\rightarrow\infty}s(\mu \tilde A+Q)$ exists. Let $v_\mu=(v_{\mu1}, v_{\mu2}, ..., v_{\mu n})$ be the left positive eigenvector satisfying $\ds\sum_{i=1}^n v_{\mu i}=1$ for $\mu \tilde A+Q$. Up to a subsequence, we have $v_\mu\rightarrow v=(v_1, v_2, ..., v_n)$ as $\mu\rightarrow\infty$ for some nonnegative vector $v$ satisfying $\ds\sum_{i=1}^n v_i=1$. Dividing both sides of  $v_\mu(\mu \tilde A+Q)=s(\mu \tilde A+Q)v_\mu$ by $\mu$ and taking $\mu\rightarrow\infty$, we obtain $v\tilde A=0$. Therefore, $v$ is the left positive eigenvector of $\tilde A$ satisfying $\ds\sum_{i=1}^n v_i=1$. Summing up all the equations of $v_\mu(\mu \tilde A+Q)=s(\mu  \tilde A+Q)v_\mu$ and using the fact that $\tilde A$ has each row sum equaling zero, we obtain
$$
\sum_{i=1}^n {v_{\mu i} q_i}=s(\mu \tilde A+Q)\sum_{i=1}^n {v_i}=s(\mu \tilde A+Q).
$$
Taking $\mu\rightarrow\infty$, we have
$$
\lim_{\mu\rightarrow\infty}s(\mu \tilde A+Q)=\sum_{i=1}^n{v_iq_i}.
$$
Since $\ds\sum_{i=1}^n{v_iq_i}<\max_{1\le i\le n}\{q_i\}$ if and only if $(q_1, q_2, ..., q_n)$ is not a multiple of $(1, 1, ..., 1)$, $s(\mu \tilde A+Q)$ is strictly decreasing if $Q$ is not a multiple of $I$.
\end{proof}

\section{Applications}\label{section-applications}
In this section we apply Theorems \ref{theorem_quasi0} and \ref{theorem_quasi} to study several population models in heterogeneous environment from ecology and epidemiology.



\subsection{Single species model}\label{section-single}
We consider a general single species model in a heterogeneous environment of $n$ patches ($n\ge 2$)
\begin{equation}\label{single}
u_i' = u_i f_i(u_i) + \mu \,\sum_{j=1}^n (a_{ij}u_j - a_{ji}u_i)-\mu \epsilon_i u_i, \quad \quad i=1, \ldots, n,
\end{equation}
where $u_i$ denotes the population size (or density) in patch $i$; function $f_i$ denotes the intrinsic growth rate in patch $i$; the connectivity matrix $A=(a_{ij})_{n\times n}$ describes the dispersal pattern between patches, where $a_{ij}\ge 0$  for $i\not=j$ quantifies the dispersal from patch $j$ to patch $i$, and $a_{jj}=-\ds\sum_{i\neq j} a_{ij}$  is the total movement out from patch $j$;  $\mu \ge 0$ is the dispersal rate, and $\epsilon_i\ge 0$ is the death rate due to dispersal. When $\ep_i=0$ for all $i$,  there is no loss of individuals during the movement between patches; and when $\ep_i>0$ for some $1\le i\le n$, there are losses of individuals during the movement between patches.  
The intrinsic growth function $f_i$ ($1\le i\le n$) satisfies
\begin{itemize}
\item[(f)] For $1\le i\le n$, $f_i:\mathbb{R}_+\to\mathbb{R}$ is continuous and strictly decreasing. Moreover, there exists $c_i>0$ such that $f_i(u_i)<0$ for all $u_i>c_i$.
\end{itemize}

System \eqref{single} admits a trivial equilibrium $E_0=(0,0,\ldots,0)$, representing the state of species extinction, and its stability can be determined by the Jacobian matrix $J=\mu (A-{\rm diag}(\epsilon_i))+\textrm{diag}(f_i(0))$.



Assume that $A$ is irreducible.   It follows from Perron-Frobenius theorem that $A-{\rm diag}(\epsilon_i)$ has a principal eigenvector $(\alpha_1, \alpha_2, \ldots, \alpha_n)^T >0$ corresponding to the principal eigenvalue $0$ such that $\alpha_i >0$ for all $i$. As shown in Theorem \ref{theorem_quasi}, $\alpha_i$ serves as the weight constant in determining the impact of patch $i$ on the dynamics of the interconnected system \eqref{single} for larger value of $\mu$. The following result describes this phenomenon in detail.

\begin{theorem}\label{thm-single}
Suppose that $A$ is an irreducible quasi-positive matrix, and assume {\rm (f)} is satisfied. Let $(\alpha_1, \alpha_2, \ldots, \alpha_n)^T >0$ denote the normalized eigenvector of $A-{\rm diag}(\epsilon_i)$ corresponding to the principal eigenvalue $0$ such that $\ds\sum_{i=1}^n{\alpha_i}=1$.  Let $M=\ds\max_{1\le i\le n}\{f_i(0)\}$ and $m=\ds\sum_{i=1}^n \alpha_i f_i(0)$. Then the following statements hold:
\begin{enumerate}
    \item[{\rm (i)}] If $M<0$, then the equilibrium $E_0$ is globally asymptotically stable in $\mathbb{R}_+^n$ for all $\mu>0$.
    \item[{\rm (ii)}]  If $\epsilon_i=0$ for all $1\le i\le n$ and $m>0$, then the equilibrium $E_0$ is  unstable for all  $\mu>0$. Furthermore, system \eqref{single} admits a unique positive equilibrium $E^*=(u_1^*, \ldots, u_n^*)$, which is globally asymptotically stable in $\mathbb{R}_+^n-\{E_0\}$.
    \item[{\rm (iii)}] If either $\epsilon_i=0$ for all $1\le i\le n$ and $m<0<M$, or  $\epsilon_i>0$ for at least some $i\in \{1,\cdots,n\}$ and $M>0$, then there exists a unique $\mu^*>0$ such that $E_0$ is globally asymptotically stable in $\mathbb{R}_+^n$  for $\mu\ge\mu^*$ while $E_0$ is unstable for $0<\mu<\mu^*$. Furthermore, if $0<\mu<\mu^*$, then there exists a unique positive equilibrium $E^*$, which is globally asymptotically stable in $\mathbb{R}_+^n-\{E_0\}$.
\end{enumerate}
\end{theorem}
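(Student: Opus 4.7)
The plan is to reduce the stability of $E_0$ to the spectral bound of its Jacobian $J(\mu) = \mu \bar{A} + Q$, where $\bar{A} = A - \text{diag}(\epsilon_i)$ and $Q = \text{diag}(f_i(0))$, and then invoke Theorem \ref{theorem_quasi}. The matrix $\bar{A}$ is irreducible quasi-positive, and since $A$ itself has column sums zero, $s(A)=0$. If $\epsilon_i = 0$ for all $i$, then $\bar{A} = A$ and $s(\bar{A})=0$; otherwise strict monotonicity of the spectral bound under a nonpositive diagonal perturbation of an irreducible quasi-positive matrix forces $s(\bar{A}) < 0$. Applying Theorem \ref{theorem_quasi} to $\bar{A}$ and $Q$ yields, in both cases, $\lim_{\mu\to 0^+}s(J(\mu)) = M$, while the second limit equals $m$ in case (ii) and $-\infty$ when some $\epsilon_i>0$. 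To identify $m = \sum\alpha_i f_i(0)$ with the limit from Theorem \ref{theorem_quasi}(2), I would verify that the left $0$-eigenvector of the similarity $\text{diag}(\alpha)^{-1} A\,\text{diag}(\alpha)$ is $\alpha$ itself, which follows because $A$ has column sums zero and hence left $0$-eigenvector proportional to $\mathbf{1}$. Strict monotonicity of $\mu\mapsto s(J(\mu))$ holds whenever $Q$ is not a scalar multiple of $I$.

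From here the three stability dichotomies follow. In (i), $s(J(\mu)) \le M < 0$ for all $\mu > 0$, so $E_0$ is linearly stable; using $u_i f_i(u_i) \le u_i f_i(0)$, the componentwise inequality $u_i'(t) \le (J(\mu) u(t))_i$ together with the comparison principle for cooperative linear systems upgrades this to global asymptotic stability on $\mathbb{R}_+^n$. In (ii), the second limit $m > 0$ and monotonicity give $s(J(\mu)) > 0$ for every $\mu > 0$, so $E_0$ is unstable. In (iii), the two endpoint limits have opposite signs and strict monotonicity produces a unique $\mu^*>0$ with $s(J(\mu^*))=0$, yielding linear stability for $\mu > \mu^*$ (with global stability by the same comparison argument) and instability for $\mu < \mu^*$.

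Existence, uniqueness, and global attraction of $E^*$ whenever $E_0$ is unstable I would handle via strongly monotone, strictly subhomogeneous dynamical systems theory. Irreducibility of $A$ makes the semiflow strongly order-preserving on $\mathbb{R}_+^n$, while hypothesis (f) supplies both a compact forward-invariant absorbing set (from $f_i(u_i)<0$ for $u_i>c_i$) and the strict sub-linearity
\begin{equation*}
F_i(\kappa u) = \kappa u_i f_i(\kappa u_i) + \mu \bigl(\bar{A}(\kappa u)\bigr)_i - \mu \epsilon_i (\kappa u_i) < \kappa F_i(u), \qquad \kappa>1,\ u\gg 0,
\end{equation*}
since $f_i$ is strictly decreasing. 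The standard Hirsch--Smith trichotomy for such flows then asserts that whenever $E_0$ is linearly unstable a unique positive equilibrium exists and attracts every orbit in $\mathbb{R}_+^n \setminus \{E_0\}$, while whenever no positive equilibrium exists every orbit converges to $E_0$.

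The main obstacle I anticipate is the borderline value $\mu=\mu^*$ in (iii), where $s(J(\mu^*))=0$ makes linearization indecisive. The plan is to rule out a positive equilibrium at $\mu=\mu^*$ by monotonicity of the spectral bound in the diagonal part: a positive equilibrium $u^*\gg 0$ would satisfy
\begin{equation*}
\bigl(\mu^*\bar{A}+\text{diag}(f_i(u_i^*))\bigr)\, u^* = 0,
\end{equation*}
so $u^*$ is a positive Perron eigenvector, forcing $s(\mu^*\bar{A}+\text{diag}(f_i(u_i^*)))=0$; but $f_i(u_i^*)<f_i(0)$ for every $i$ gives $s(\mu^*\bar{A}+\text{diag}(f_i(u_i^*)))<s(J(\mu^*))=0$, a contradiction. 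Absence of a positive equilibrium at $\mu=\mu^*$, combined with the subhomogeneous Hirsch--Smith trichotomy, then forces every orbit to converge to $E_0$, completing (iii).
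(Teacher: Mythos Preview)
Your proof is correct and follows essentially the paper's route: reduce local stability of $E_0$ to the sign of $s(J(\mu))$, apply Theorem~\ref{theorem_quasi} for the monotonicity and endpoint limits, and then invoke strongly monotone subhomogeneous dynamics (the paper cites \cite[Theorem~2.3.4]{zhao2003dynamical}) for the global dichotomy. Two minor remarks: in your subhomogeneity display the term $-\mu\epsilon_i(\kappa u_i)$ is double-counted since $\bar A=A-\text{diag}(\epsilon_i)$ already contains it, and your explicit treatment of the borderline $\mu=\mu^*$ (ruling out a positive equilibrium via $s(\mu^*\bar A+\text{diag}(f_i(u_i^*)))<s(J(\mu^*))=0$) is a nice detail that the paper leaves inside the cited black-box result.
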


\begin{proof}
The local stability of $E_0$ is determined by the sign of the spectral bound of the Jacobian matrix $J(\mu) =\mu (A-{\rm diag}(\epsilon_i))+\textrm{diag}(f_i(0))$, i.e. $E_0$ is locally asymptotically stable if $s(J(\mu))<0$ and it is unstable if $s(J(\mu))>0$.  By Theorem \ref{theorem_quasi}, if $\epsilon_i=0$ for all $1\le i\le n$, $s(J(\mu))$ is non-increasing for $\mu\in (0, \infty)$ with
$$
m=\lim_{\mu\rightarrow\infty} s(J(\mu)) \le s(J(\mu))\le \lim_{\mu\rightarrow 0} s(J(\mu))=M,
$$
and if $m< M$ then  $s(J(\mu))$ is strictly decreasing; if  $\epsilon_i>0$ for at least some $i\in \{1,\cdots,n\}$, $s(J(\mu))$ is strictly decreasing in $\mu$ with
$$
-\infty=\lim_{\mu\rightarrow\infty} s(J(\mu)) < s(J(\mu))< \lim_{\mu\rightarrow 0} s(J(\mu))=M.
$$
The claim on the local stability of $E_0$ follows from these observations.

Since $A$ is irreducible and quasi-positive, \eqref{single} generates a strongly monotone dynamical system \cite{smith2008monotone}. Since $f_i$ is strictly decreasing for $1\le i\le n$,  the semiflow generates by  \eqref{single} is strongly sub-homogeneous \cite{zhao2003dynamical} (i.e. $\lambda T(t){ u}_0\ll T(t)\lambda { u}_0$ for all $\lambda\in (0, 1)$ and initial data ${ u}_0\gg0$, where $T(t)$ is the semiflow induced by \eqref{single}). Moreover, by the assumption (f), the solutions of \eqref{single} are bounded and dissipative (i.e. uniformly ultimately bounded).  Therefore, by \cite[Theorem 2.3.4]{zhao2003dynamical}, if  $s(J(\mu))\le 0$, $E_0$ is globally stable; if $s(J(\mu))> 0$, $E_0$ is unstable and there exists a unique globally stable positive steady state $E^*$. This completes the proof.
\end{proof}

\begin{remark}
\begin{enumerate}
    \item If the maximum growth rates $f_i(0)$ at the $i$-th patch are not all identical, then $m<M$.
    \item The $i$-th patch is a sink if $f_i(0)\le 0$ and it is a source if $f_i(0)>0$. If all patches are source, then apparently $m>0$, but the population can still become extinct for large $\mu$ if there is population loss during dispersal. If some patches are sink, then it is possible that $m<0$ but it still depends on the network connection. when the sink patches carries a larger weight $\alpha_i$, it is more likely that $m<0$.
    \item The extinction/persistence dichotomy of dynamics in terms of stability of extinction state  and the global stability of positive equilibrium of  \eqref{single} are  well-known, see for example \cite{cosner1996variability,LiShuai,Lu1993}.  Theorem \ref{thm-single} shows how the  extinction or persistence of population depends on the dispersal coefficient $\mu$.
    \item A similar result for reducible $A$ can also be obtained by using Corollary \ref{cor2.3} and the approach in \cite{Du2014}.
\end{enumerate}
\end{remark}

\subsection{Predator-prey model}
We consider the following predator-prey model with a general functional response in heterogeneous environment of $n$ patches ($n\ge2$):
\begin{equation}\label{predator-prey}
\begin{cases}
u_i'=r_iu_i\left(1-\displaystyle\frac{u_i}{K_i}\right)-g_i(u_i)v_i+\mu_u\ds\sum_{j=1}^{n}(a_{ij}u_j-a_{ji}u_i),&i=1,2,\dots,n,\\
v_i'=v_i(c_ig_i(u_i)-d_i)+\mu_v\ds\sum_{j=1}^n (b _{ij}v_j-b_{ji}v_i),&i=1,2,\dots,n,\\
\end{cases}
\end{equation}
where $u_i$ and $v_i$ denotes the population density of the prey and predators in the $i$-th patch, respectively; $r_i, K_i> 0$ are the growth rate and carrying capacity of the prey in the $i$-th patch, respectively;
 $d_i$ is the mortality rate of the predator, and $c_i$ is the conversion rate of the predation; the connectivity matrices $A=(a_{ij})_{n\times n}$ and $B=(b_{ij})_{n\times n}$ describe the dispersal pattern between patches for prey and predators respectively, where $a_{ij}\ge 0$ and $b_{ij}\ge 0$, $i\not=j$,  denote rate of the prey and predators from patch $j$ to patch $i$, and $a_{jj}=-\ds\sum_{i\neq j} a_{ij}$ and $b_{jj}=-\ds\sum_{i\neq j} b_{ij}$ are the total movement out from patch $j$ of the prey and predators, respectively; $\mu_u,\mu_v \ge 0$ denote the rates of dispersal of the two species $u$ and $v$, respectively. Function $g_i$ denotes the functional response of  predator in the $i$-th patch and satisfies the following assumption.
 \begin{enumerate}
     \item[(g)] For $1\le i\le n$, $g_i:\mathbb{R}_+\to\mathbb{R_+}$ is continuous, strictly increasing and $g_i(0)=0$.
 \end{enumerate}

The following result highlights the impact of dispersal rates on population dynamics of \eqref{predator-prey}.
\begin{theorem}\label{thm-prey}
Suppose that $A$ and $B$ are irreducible matrices, and assume {\rm (g)} is satisfied. Let $(\alpha_1, \alpha_2, \ldots, \alpha_n)^T$ be the positive eigenvector of $B$ corresponding to  eigenvalue $0$ with $\ds\sum_{i=1}^n{\alpha_i}=1$.  Then for any $\mu_u> 0$, $\mu_v> 0$, system \eqref{predator-prey} admits a trivial equilibrium $E_0=(0, 0, \dots, 0)$ and a unique semitrivial equilibrium $E_1=(u^*_1,\dots,u^*_n,0,\dots,0),$ where $u_i^*>0$ and satisfies
\begin{equation}\label{singleu}
r_iu^*_i\left(1-\displaystyle\frac{u^*_i}{K_i}\right)+\mu_u\sum_{j=1}^{n}(a_{ij}u^*_j-a_{ji}u^*_i)=0,\ \ j=1,2,\dots,n.
\end{equation}
Denote $M=\ds\max_{1\le i\le n}\{c_ig_i(u_i^*)-d_i\}$ and $m=\ds\sum_{i=1}^n \alpha_i \left(c_ig_i(u_i^*)-d_i\right)$.
Then the following statements hold:
\begin{enumerate}
    \item[{\rm (i)}] $E_0$ is unstable for any $\mu_v> 0$.
    \item[{\rm (ii)}] If $M<0$, then the equilibrium $E_1$ is  globally asymptotically stable for all $\mu_v> 0$.
    \item[{\rm (iii)}] If $m>0$, then the equilibrium $E_1$ is  unstable for all $\mu_v> 0$.
    \item[{\rm (iv)}] If $m<0<M$, then there exists a unique $\mu_v^*>0$ such that $E_1$ is globally asymptotically stable for $\mu_v>\mu_v^*$  while $E_1$ is unstable for $0<\mu_v<\mu_v^*$.
\end{enumerate}
\end{theorem}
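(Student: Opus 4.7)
My first task is to establish existence and uniqueness of $E_1$. The subsystem obtained by setting $v_i \equiv 0$ in \eqref{predator-prey} is exactly a single-species patch model of the form \eqref{single} with $f_i(u)=r_i(1-u/K_i)$ and no dispersal loss ($\epsilon_i=0$). Since $f_i(0)=r_i>0$ for every $i$, the averaged quantity corresponding to $m$ in Theorem \ref{thm-single} is $\sum_i\alpha_i^{(A)}r_i>0$, where $\alpha^{(A)}$ is the positive eigenvector of $A$ for eigenvalue $0$. Thus Theorem \ref{thm-single}(ii) yields a unique positive equilibrium $u^*\gg 0$ of \eqref{singleu} that is globally asymptotically stable for the prey-only system; this gives the semitrivial equilibrium $E_1$ and, as a by-product, the Jacobian of the prey-only system at $u^*$, call it $J_u^*$, has $s(J_u^*)<0$.

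Next I would exploit the block-triangular structure of the Jacobians. At $E_0$, because $g_i(0)=0$, the Jacobian decouples into blocks $\mu_uA+\mathrm{diag}(r_i)$ and $\mu_vB+\mathrm{diag}(-d_i)$. Since $A$ has zero column sums and is irreducible, $s(A)=0$, so applying Theorem \ref{theorem_quasi}(2) with $Q=\mathrm{diag}(r_i)$ gives $s(\mu_uA+\mathrm{diag}(r_i))\ge \sum \alpha_i^{(A)}r_i>0$, which proves $(i)$. At $E_1$, the entries $\partial v_i'/\partial u_j$ all vanish (they each contain a factor $v_i=0$), so the Jacobian is block lower triangular with diagonal blocks $J_u^*$ and
\[
J_v(\mu_v)=\mu_vB+\mathrm{diag}\bigl(c_ig_i(u_i^*)-d_i\bigr).
\]
Hence local stability of $E_1$ is governed entirely by $\operatorname{sgn}\,s(J_v(\mu_v))$.

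Now I apply Theorem \ref{theorem_quasi}(2) to $J_v(\mu_v)$. Because $B$ is irreducible quasi-positive with zero column sums, $s(B)=0$; and the positive right eigenvector $\alpha$ of $B$ with $\sum\alpha_i=1$ is precisely the vector denoted $\alpha$ in the statement. A short computation (conjugating $B$ by $U=\mathrm{diag}(\alpha_i)$ so that $\widetilde B=U^{-1}BU$ has zero row sums, and tracking the left eigenvector back through $U$) identifies the vector $v$ appearing in Theorem \ref{theorem_quasi}(2) with $\alpha$ itself. Consequently
\[
\lim_{\mu_v\to 0^+} s(J_v(\mu_v))=M, \qquad \lim_{\mu_v\to\infty} s(J_v(\mu_v))=m,
\]
and $s(J_v(\mu_v))$ is strictly decreasing in $\mu_v$ whenever the diagonal matrix is not a multiple of $I$ (which is automatic when $m<M$, in particular in case $(iv)$). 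This immediately yields the local parts of $(ii)$, $(iii)$, $(iv)$, including the existence and uniqueness of the threshold $\mu_v^*$ in $(iv)$ by the intermediate value theorem.

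It remains to upgrade local stability of $E_1$ to global asymptotic stability in cases $(ii)$ and in the $\mu_v>\mu_v^*$ regime of $(iv)$; this is where I expect the main difficulty. My approach is a two-step comparison argument. Since $g_i(u_i)v_i\ge 0$, the prey equation satisfies $u_i'\le r_iu_i(1-u_i/K_i)+\mu_u\sum_j(a_{ij}u_j-a_{ji}u_i)$, so by comparison with the prey-only system and Theorem \ref{thm-single}(ii) one obtains $\limsup_{t\to\infty}u_i(t)\le u_i^*$. Fix small $\epsilon>0$; eventually $u_i(t)\le u_i^*+\epsilon$, and monotonicity of $g_i$ gives $v_i'\le v_i(c_ig_i(u_i^*+\epsilon)-d_i)+\mu_v(Bv)_i$. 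The linear comparison system has spectral bound $s(J_v(\mu_v))+O(\epsilon)$ by continuity; since $s(J_v(\mu_v))<0$ under our hypotheses, choosing $\epsilon$ small forces the comparison system to be exponentially stable, hence $v_i(t)\to 0$. With $v_i\to 0$ the prey equation is asymptotically autonomous with limit the prey-only system, and the theory of asymptotically autonomous semiflows (e.g.\ Thieme's theorem) combined with the GAS of $u^*$ for that limit yields $u_i(t)\to u_i^*$. The principal care is needed to verify the perturbation $s(J_v(\mu_v))+O(\epsilon)<0$ uniformly and to justify the asymptotically-autonomous step given that limit sets remain in the dissipative region guaranteed by the logistic prey growth; after these checks, global convergence to $E_1$ follows, completing $(ii)$ and $(iv)$.
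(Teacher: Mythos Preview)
Your proposal is correct and follows essentially the same route as the paper: existence/uniqueness of $E_1$ via Theorem~\ref{thm-single}, reduction of local stability to the sign of $s(J_v(\mu_v))$ via the block-triangular Jacobian, application of Theorem~\ref{theorem_quasi} to obtain the limits $M$ and $m$ and the monotonicity, and the same two-step comparison/asymptotically-autonomous argument for global stability. Two small points worth tightening: the Jacobian at $E_1$ is block \emph{upper} (not lower) triangular, since it is the $(2,1)$ block $\partial v_i'/\partial u_j$ that vanishes while the $(1,2)$ block $-g_i(u_i^*)\delta_{ij}$ does not; and the strict inequality $s(J_u^*)<0$ is not literally a by-product of Theorem~\ref{thm-single} but follows (as the paper notes) from $s(\mu_uA+\mathrm{diag}(r_i(1-2u_i^*/K_i)))<s(\mu_uA+\mathrm{diag}(r_i(1-u_i^*/K_i)))=0$ by monotonicity of the spectral bound in the diagonal.
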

\begin{proof}
The existence and uniqueness of $E_1$ follow from Theorem \ref{thm-single}. We prove the local stability/instability of $E_1$ in (ii)-(iv), as the proof of (i) is similar and simpler.
Linearizing \eqref{predator-prey} at $E_1$, the local stability of $E_1$ is determined by the following eigenvalue problem:
\begin{equation}\label{eigp}
\begin{cases}
\lambda \phi_i=r_i\phi_i\left(1-2\displaystyle\frac{ u^*_i}{K_i}\right)-g(u_i^*)\psi_i+\mu_u\ds\sum_{j=1}^{n}(a_{ij}\phi_j-a_{ji}\phi_i), &i=1,2,\dots,n,\\
\lambda \psi_i=\psi_i(c_ig(u_i^*)-d_i)+\mu_v\ds\sum_{j=1}^{n}(a_{ij}\psi_j-a_{ji}\psi_i),&i=1,2,\dots,n.
\end{cases}
\end{equation}
If ${\rm Re}\lambda<0$ for any eigenvalue $\lambda$ of \eqref{eigp}, then $E_1$ is locally asymptotically stable; if \eqref{eigp} has an eigenvalue $\lambda$ with  ${\rm Re}\lambda>0$, then $E_1$ is unstable.

We claim that the local  stability of $E_1$ is determined by the sign of $s(\mu_vA+\textrm{diag}(c_ig_i(u_i^*)-d_i))$. To see this, let $(\phi, \psi)$ with $\phi=(\phi_1, \phi_2, ..., \phi_n)^T$ and $\psi=(\psi_1, \psi_2, ..., \psi_n)^T$ be an eigenvector of \eqref{eigp} corresponding to eigenvalue $\lambda$. If $\psi=  0$, then $\lambda$ is an eigenvalue of
$$
\lambda \phi_i=r_i\phi_i\left(1-2\displaystyle\frac{ u^*_i}{K_i}\right)+\mu_u\ds\sum_{j=1}^{n}(a_{ij}\phi_j-a_{ji}\phi_i), \ \ i=1,\dots,n,
$$
i.e. an eigenvalue of $\mu_uA+\text{diag}(r_i(1-2u_i^*/K))$. By \eqref{singleu} and Perron-Frobenius Theorem, $s(\mu_uA+\text{diag}(r_i(1-u_i^*/K)))=0$. Therefore, $s(\mu_uA+\text{diag}(r_i(1-2u_i^*/K)))<s(\mu_uA+\text{diag}(r_i(1-u_i^*/K)))=0$. 
Hence, we have
\begin{equation}\label{lambda11}
{\rm Re}\lambda\le s(\mu_uA+\text{diag}(r_i(1-2u_i^*/K)))<0.
\end{equation}
If $\psi\neq  0$, $\lambda$ is an eigenvalue of
$$
\lambda \psi_i=\psi_i(c_ig(u_i^*)-d_i)+\mu_v\ds\sum_{j=1}^{n}(a_{ij}\psi_j-a_{ji}\psi_i), \ \ i=1,\dots,n,
$$
i.e. $\lambda$ is an eigenvalue of $\mu_vA+\textrm{diag}(c_ig_i(u_i^*)-d_i)$. Noticing \eqref{lambda11}, we see that the local stability of $E_1$ is determined by the sign of $s(\mu_vA+\textrm{diag}(c_ig_i(u_i^*)-d_i))$. Then the results (ii)-(iv) on the local stability of $E_1$ follow from the claim and Theorems \ref{theorem_quasi0} and \ref{theorem_quasi}.

It remains to prove the global stability of $E_1$ when $s(\mu_vA+\textrm{diag}(c_ig_i(u_i^*)-d_i))<0$. Suppose that $(u_1(0), u_2(0), ..., u_n(0))$ is nontrivial.  Let $\hat u_i(t)$, $1\le i\le n$, be the solution of
\begin{equation*}
\begin{cases}
\hat u_i'=r_i\hat u_i\left(1-\displaystyle\frac{\hat u_i}{K_i}\right)+\mu_u\ds\sum_{j=1}^{n}(a_{ij}\hat u_j-a_{ji}\hat u_i),\ \ &i=1,2,\dots,n,\\
\hat u_i(0)=u_i(0), \ \ &i=1,2,\dots,n.
\end{cases}
\end{equation*}
By the comparison principle, we have $u_i(t)\le
\hat u_i(t)$ for all $t\ge 0$ and $1\le i\le n$. By Theorem \ref{thm-single}, we have $\ds\lim_{t\rightarrow\infty} \hat u_i(t)=u_i^*$, and it follows that
$\ds\limsup_{t\rightarrow\infty} \hat u_i(t)=u_i^*$
for $1\le i\le  n$.
Choose $\epsilon_0>0$ such that $s(\mu_vA+\textrm{diag}(c_ig_i(u_i^*+\epsilon_0)-d_i))<0$. Then there exists $T>0$ such that $u_i(t)\le u_i^*+\epsilon_0$ for all $t\ge T$. By the second equation of \eqref{predator-prey} and the monotonicity of $g_i$, we have
\begin{equation*}
\begin{cases}
v_i'\le v_i(c_ig(u_i^*+\epsilon_0)-d_i)+\mu_v\ds\sum_{j=1}^n (b _{ij}v_j-b_{ji}v_i), \ \ &t\ge T, \; i=1,2,\dots,n,\\
v_i(T)\le C \tilde\alpha_i, \ \ &t\ge T, \; i=1,2,\dots,n,
\end{cases}
\end{equation*}
where $(\tilde\alpha_1, \tilde\alpha_2, ..., \tilde\alpha_n )$ is a positive principal eigenvector of $\mu_vA+\textrm{diag}(c_ig_i(u_i^*+\epsilon_0)-d_i)$ corresponding with eigenvalue $s_0:=s(\mu_vA+\textrm{diag}(c_ig_i(u_i^*+\epsilon_0)-d_i))$ and $C>0$ is large. By the comparison principle, we have $v_i(t)\le \hat v_i(t)$ for $t\ge T$, where $\hat v_i$ is the solution of the problem
\begin{equation}\label{cc1}
\begin{cases}
\hat v_i'= \hat v_i(c_ig(u_i^*+\epsilon_0)-d_i)+\mu_v\ds\sum_{j=1}^n (b _{ij}\hat v_j-b_{ji}\hat v_i), \ \ &t\ge T, \; i=1,2,\dots,n,\\
\hat v_i(T)= C \tilde\alpha_i, \ \ &t\ge T,\; i=1,2,\dots,n.
\end{cases}
\end{equation}
It is easy to check that the solution of \eqref{cc1} is $\hat v_i(t)= C\tilde \alpha_i e^{s_0(t-T)}$, $1\le i\le  n$. Since $s_0<0$, we have $\ds\lim_{t\rightarrow\infty}
\hat v_i(t)=0$, which implies $\ds\lim_{t\rightarrow\infty}
 v_i(t)=0$. Finally by the theory of asymptotically
autonomous semiflows (see, e.g., \cite{thieme1992convergence}) and Theorem \ref{thm-single}, we have
$\ds\lim_{t\rightarrow\infty}
u_i(t)=u_i^*$, $1\le i\le  n$.
\end{proof}

\begin{remark}
\begin{enumerate}
    \item When $\mu_v=0$, then $s(\mu_vA+\textrm{diag}(c_ig_i(u_i^*)-d_i))=M$ so part {\rm (ii)} in Theorem \ref{thm-prey} still holds.
    \item When $E_1$ is unstable, one can show the existence of a coexistence equilibrium $E_2$ through the theory of uniform persistence. When the functional response $g_i$ is of Lotka-Volterra type ($g_i(u_i)=u_i$), $E_2$ can be shown to be globally asymptotically stable when $\mu_v=0$ (see \cite[Theorem 6.1]{li2010global}). But when $g_i$ is of Monod type ($g_i(u_i)=u_i/(a_i+u_i)$), \eqref{predator-prey} is an $n$-patch Rosenzweig-MacArthur predator-prey system, $E_2$ may be unstable and the system could have a limit cycle even in the $1$-patch case.
    \item In Theorem \ref{thm-prey}, the growth rate $r_i$ for the prey is assumed to be positive in all patches. If $r_i$ are not all positive, then from Theorem \ref{thm-single}, a unique critical prey dispersal rate $\mu_u^*>0$ may exist so that $E_0$ is globally asymptotically stable for $\mu_u>\mu_u^*$  while $E_0$ is unstable for $0<\mu_u<\mu_u^*$. In that case, results {\rm (ii)-(iv)} in Theorem \ref{thm-prey} hold for $0<\mu_u<\mu_u^*$.
\end{enumerate}
\end{remark}

\subsection{Lotka-Volterra competition model}\label{section5.3}
We consider the following Lotka-Volterra competition model in a heterogeneous environment of $n$ patches ($n\ge2$):
\begin{equation}\label{competition}
\begin{cases}
u_i'=u_i(p_i-u_i-v_i)+\mu_u\ds\sum_{j=1}^{n}(a_{ij}u_j-a_{ji}u_i), &i=1,\dots,n,\\
v_i'=v_i(p_i-u_i-v_i)+\mu_v\ds\sum_{j=1}^{n}(a_{ij}v_j-a_{ji}v_i),&i=1,\dots,n,\\
u(0)=u_0\ge(\not\equiv)0,\;v(0)=v_0\ge(\not\equiv)0,
\end{cases}
\end{equation}
where
$u=(u_1,\dots,u_n)$ and $v=(v_1,\dots,v_n)$, and $u_i$ and $v_i$ denote the population densities of two competing species in patch $i$, respectively;
$\mu_u,\mu_v\ge0$ are the dispersal rates of the two species, respectively;
$p_i\in\mathbb{R}$ represents the intrinsic growth rates of species $u_i$ and $v_i$ in patch $i$; and
$a_{ij}\ge 0 \, (i\not=j)$ is the movement rate from patch $j$ to patch $i$, $a_{jj}=-\ds\sum_{i\neq j} a_{ij}$  is the total movement out from patch $j$, and the matrix $A=(a_{ij})$ is irreducible. Let $(\alpha_1, \alpha_2, ..., \alpha_n)^T$ be the positive eigenvector of $A$ satisfying $\ds\sum_{i=1}^n \alpha_i=1$.
The two competing species are assumed to be identical except for the dispersal rates.

Denote $M=\ds\max_{1\le i\le n}\{p_i\}$ and $m=\ds\sum_{i=1}^n\alpha_i p_i$. If $M<0$, then the trivial equilibrium is the only nonnegative equilibrium, which is globally asymptotically stable. Therefore,  in the following we assume that
$M>0$.
By Theorem \ref{thm-single}, we obtain the following result about the existence/nonexistence of nonnegative semi-trivial equilibria:
\begin{lemma}\label{semitrival}
Suppose  $M>0$ and $\mu_u<\mu_v$. Then the following results hold:
\begin{enumerate}
    \item [{\rm (i)}] if $m>0$, then system \eqref{competition} admits exactly two nonnegative semi-trivial equilibria
    $(u^*,0)$ and $(0,v^*)$, where $w^*=(w^*_1,\dots,w^*_n)$ for $w=u,v$.
    \item [{\rm (ii)}] if $m<0$, then there exists a unique $\mu_*>0$ such that $s(\mu_* A+diag(p_i))=0$. Moreover,
         when $\mu_*\le \mu_u<\mu_v$,  there exists no nonnegative semi-trivial equilibrium;
       when $\mu_u<\mu_*\le \mu_v$,  there exists exactly one nonnegative semi-trivial equilibrium
        $(u^*, 0)$;
        and when $\mu_u<\mu_v<\mu_*$,  there exist exactly two nonnegative semi-trivial equilibria
    $(u^*, 0)$ and $(0,v^*)$.
\end{enumerate}
\end{lemma}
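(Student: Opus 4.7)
The plan is to reduce the existence question for the semi-trivial equilibria to the single-species result in Theorem \ref{thm-single} and then to read off the $\mu$-dependence of the governing spectral bound from Theorem \ref{theorem_quasi}.

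First I would observe that $(u^*,0)$ is a nonnegative equilibrium of \eqref{competition} with $u^*\gg 0$ precisely when $u^*$ is a positive equilibrium of the single-species system
\begin{equation*}
u_i' = u_i(p_i - u_i) + \mu_u \sum_{j=1}^{n}\bigl(a_{ij}u_j - a_{ji}u_i\bigr), \qquad i = 1,\ldots,n,
\end{equation*}
which is the special case of \eqref{single} with $f_i(u) = p_i - u$ and $\epsilon_i = 0$ (hypothesis (f) is clearly satisfied). Theorem \ref{thm-single} then says that such a $u^*$ exists and is unique if and only if the spectral bound $\Lambda(\mu_u) := s(\mu_u A + \mathrm{diag}(p_i))$ is strictly positive; the analogous statement with $\mu_u$ replaced by $\mu_v$ controls $(0,v^*)$. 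Thus the lemma reduces to determining the sign of $\Lambda(\mu)$ on $(0,\infty)$.

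Next I would apply Theorem \ref{theorem_quasi}(2) to $\Lambda(\mu)$. Since $A$ is irreducible quasi-positive with column sums zero, $s(A)=0$, and the theorem supplies $\lim_{\mu\to 0^+}\Lambda(\mu)=M$ together with $\lim_{\mu\to\infty}\Lambda(\mu)=\sum_i v_i p_i$ for the weight vector $v$ constructed there, and strict monotonicity whenever the $p_i$ are not all equal. A short check identifies $v=(\alpha_1,\ldots,\alpha_n)$: with $U=\mathrm{diag}(\alpha_i)$ and $\tilde A = U^{-1}AU$, any left eigenvector of $\tilde A$ at eigenvalue $0$ equals a left eigenvector of $A$ at $0$ (namely $(1,\ldots,1)$, since $A$ has column sums zero) multiplied on the right by $U$; after the normalization $\sum v_i=1$ this gives $v_i=\alpha_i$, and hence $\lim_{\mu\to\infty}\Lambda(\mu)=m$.

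Finally I would run the case analysis. In case (i), $m>0$ gives $\Lambda(\mu)\ge m>0$ for every $\mu>0$, so both $(u^*,0)$ and $(0,v^*)$ exist and are unique by the first step. In case (ii), the inequality $m<0<M$ forces the $p_i$ not all equal, so $\Lambda$ is continuous and strictly decreasing from values near $M>0$ down to values near $m<0$; the intermediate value theorem combined with strict monotonicity produces a unique $\mu_*>0$ with $\Lambda(\mu_*)=0$, and $\Lambda(\mu)>0$ iff $\mu<\mu_*$. Comparing $\mu_u<\mu_v$ against $\mu_*$ yields the three subcases as stated. The argument presents no substantive obstacle; the only fine point worth checking carefully is the identification of the asymptotic weight in Theorem \ref{theorem_quasi}(2) as $(\alpha_1,\ldots,\alpha_n)$ under our column-sum-zero convention for $A$.
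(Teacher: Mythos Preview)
Your proposal is correct and follows exactly the route the paper intends: the paper states the lemma as an immediate consequence of Theorem \ref{thm-single} (applied to each single-species subsystem) together with the monotonicity and limiting values of $s(\mu A+\mathrm{diag}(p_i))$ from Theorem \ref{theorem_quasi}. Your extra care in identifying the asymptotic weight vector with $(\alpha_1,\ldots,\alpha_n)$ under the column-sum-zero convention is the only point not made explicit in the paper, and your argument for it is sound.
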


Next we show that system \eqref{competition} has no positive equilibrium.
\begin{lemma}\label{nonexistencep}
Suppose  $M>0$ and $\mu_u<\mu_v$. If $(p_1, p_2, ..., p_n)$ is not a multiple of $(\alpha_1,\alpha_2, ..., \alpha_n)$, then system \eqref{competition} admits no nonnegative equilibrium
$(\bar u,\bar v)=(\bar u_1,\dots,\bar u_n,$
$\bar v_1,\dots,\bar v_n)$ with $\bar u\ge (\not\equiv)\, 0$ and $\bar v\ge (\not\equiv)\, 0$.
\end{lemma}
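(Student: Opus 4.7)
The plan is to argue by contradiction using the strict monotonicity of the spectral bound established in Theorem \ref{theorem_quasi}. Assuming such an equilibrium $(\bar u, \bar v)$ exists, I would introduce the diagonal matrix $D := \text{diag}(p_i - \bar u_i - \bar v_i)$ and rewrite the equilibrium equations as $(\mu_u A + D)\bar u = 0$ and $(\mu_v A + D)\bar v = 0$. This exhibits $\bar u$ and $\bar v$ as nonnegative eigenvectors at eigenvalue $0$ of two irreducible quasi-positive matrices. A Perron-Frobenius argument (after shifting by a large positive multiple of $I$ to obtain an irreducible nonnegative matrix) then forces $\bar u \gg 0$, $\bar v \gg 0$, and identifies $0$ with both spectral bounds:
\[
s(\mu_u A + D) \;=\; 0 \;=\; s(\mu_v A + D).
\]

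Next, I would invoke Theorem \ref{theorem_quasi}(2). Because $A$ has zero column sums it is an irreducible Laplacian matrix with $s(A)=0$, so the map $\mu \mapsto s(\mu A + D)$ is either constant (exactly when $D$ is a scalar multiple of $I$) or strictly decreasing. Since the two distinct values $\mu_u < \mu_v$ produce the same spectral bound, $D$ must be a multiple of $I$; that is, there exists $c \in \mathbb{R}$ with $p_i - \bar u_i - \bar v_i = c$ for all $i$.

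The remaining step is to rule out $D = cI$. Left-multiplying $\mu_u A \bar u = -c\,\bar u$ by the row vector $(1,\dots,1)$ and using the vanishing column sums of $A$ yields $c\sum_i \bar u_i = 0$, so $c=0$ because $\bar u \gg 0$. Then $A\bar u = A\bar v = 0$, and Perron-Frobenius forces $\bar u = t\alpha$ and $\bar v = s\alpha$ for some $t,s>0$, where $\alpha=(\alpha_1,\dots,\alpha_n)^T$. Substituting into $p_i = \bar u_i + \bar v_i$ gives $p_i = (t+s)\alpha_i$ for every $i$, contradicting the hypothesis that $(p_1,\dots,p_n)$ is not a multiple of $(\alpha_1,\dots,\alpha_n)$.

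I expect the conceptual heart of the argument to be the recasting of the equilibrium system as two spectral-bound conditions involving the \emph{same} diagonal matrix $D$ but different dispersal rates $\mu_u$ and $\mu_v$; once this is done, Theorem \ref{theorem_quasi} does the heavy lifting. The only technical subtlety is the initial Perron-Frobenius step that upgrades a nontrivial nonnegative null vector of an irreducible quasi-positive matrix to a strictly positive Perron vector with $0$ coinciding with the spectral bound, but this is a standard consequence of the classical theorem after a diagonal shift.
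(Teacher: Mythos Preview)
Your argument is correct and follows essentially the same route as the paper's proof: recast the equilibrium equations as $(\mu_u A+D)\bar u=0$ and $(\mu_v A+D)\bar v=0$, use Perron--Frobenius to conclude $s(\mu_u A+D)=s(\mu_v A+D)=0$, and then invoke the strict monotonicity of $\mu\mapsto s(\mu A+D)$ to force $D=cI$, which leads to the contradiction $p_i=(t+s)\alpha_i$. Your write-up is in fact slightly more explicit than the paper's in justifying $c=0$ (via the zero column sums of $A$); the paper deduces this implicitly from the fact that the positive Perron vector $\bar u$ of $A$ must correspond to $s(A)=0$.
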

\begin{proof}
Assume on the contrary that such an equilibrium $(\bar u,\bar v)=(\bar u_1,\dots,\bar u_n,\bar v_1,\dots,\bar v_n)$ exists.
Let $Q=diag(p_i-\bar u_i-\bar v_i)$. Since $(\bar u,\bar v)$ is an equilibrium of \eqref{competition}, we have
\begin{equation*}
\begin{split}
&\bar u_i(p_i-\bar u_i-\bar v_i)+\mu_u\ds\sum_{j=1}^{n}a_{i j}\bar u_j=0, \ \  i=1,2,\dots,n,
\\
&\bar v_i(p_i-\bar u_i-\bar v_i)+\mu_v\ds\sum_{j=1}^n a _{i j}\bar v_j=0, \ \  i=1,2,\dots,n.
\end{split}
\end{equation*}
Therefore, $\bar u$ and $\bar v$ are  nonnegative eigenvectors of $\mu_u A+Q$ and $\mu_v A+Q$ corresponding with eigenvalue $0$, respectively. Since $A$ is irreducible, by the Perron-Frobenius theorem, we have
$$
s(\mu_u A+Q)=s(\mu_v A+Q)=0.
$$
Since $\mu_u<\mu_v$, by Theorem \ref{theorem_quasi0} or Theorem \ref{theorem_quasi}, $Q$ is a multiple of $I$ and $\bar u, \bar v$ are eigenvectors of $A$. It follows that $p_1-u_1-v_1=p_2-u_2-v_2=\cdots=p_n-u_n-v_n=0$ and $(p_1, p_2, ..., p_n)$ is a multiple of $(\alpha_1,\alpha_2, ..., \alpha_n)$, which is a contradiction. This completes the proof.
\end{proof}

In the following, we will use monotone dynamical system theory to investigate the global dynamics of \eqref{competition}. Let $\le_K, <_K, \ll_K$ be the order of $\mathbb{R}^n\times\mathbb{R}^n$ generated by the cone $\mathbb{R}^n_+\times (-\mathbb{R}^n_+)$ defined in the usual way. For example, $(u, v)<_K (w, z)$ means
$$
u\le w, \ \ v\ge z \ \text{and}\ \ (u, v)\neq  (w, z).
$$
Then the solutions of system \eqref{competition} induce a strictly monotone semiflow on  $\mathbb{R}^n_+\times \mathbb{R}^n_+$:
\begin{lemma}
Let $\left(u^{(i)}(t),v^{(i)}(t)\right)$  be the corresponding solutions
of \eqref{competition} with nonnegative initial value $\left(u^{(i)}_0,v^{(i)}_0\right)$ for $i=1,2$, where $u^{(2)}_0\ge (\not\equiv)\, 0$, $v^{(1)}_0\ge (\not\equiv)\, 0$ and
$$
\left(u^{(2)}_0,v^{(2)}_0\right)<_K\left(u^{(1)}_0,v^{(1)}_0\right).
$$
Then $\left(u^{(2)}(t),v^{(2)}(t)\right)\ll_K\left(u^{(1)}(t),v^{(1)}(t)\right)$ for any $t>0$.
\end{lemma}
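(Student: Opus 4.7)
The plan is to exhibit \eqref{competition} as a strongly order-preserving semiflow with respect to the competitive (type-K) order $\le_K$ generated by $\mathbb{R}^n_+\times(-\mathbb{R}^n_+)$, and then use the irreducibility of $A$ to upgrade a weak comparison to a strict one. The hypothesis $u^{(2)}_0\ge(\not\equiv)0$, $v^{(1)}_0\ge(\not\equiv)0$, and $(u^{(2)}_0,v^{(2)}_0)<_K(u^{(1)}_0,v^{(1)}_0)$ together imply that all four initial vectors are nontrivial nonnegative, and that at least one of $U_0:=u^{(1)}_0-u^{(2)}_0\ge 0$, $V_0:=v^{(2)}_0-v^{(1)}_0\ge 0$ is not identically zero.

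First I would verify that the right-hand side of \eqref{competition}, viewed as a vector field $F=(F^u,F^v)$ on $\mathbb{R}^{2n}_+$, is quasi-monotone of type K with respect to $\le_K$: for $i\ne j$ the off-diagonal partials $\partial F^u_i/\partial u_j=\mu_u a_{ij}\ge 0$ and $\partial F^v_i/\partial v_j=\mu_v a_{ij}\ge 0$ encode intra-species cooperation through dispersal, while $\partial F^u_i/\partial v_j=-u_i\delta_{ij}\le 0$ and $\partial F^v_i/\partial u_j=-v_i\delta_{ij}\le 0$ encode inter-species competition. A standard Kamke-type comparison theorem then delivers the weak ordering $(u^{(2)}(t),v^{(2)}(t))\le_K(u^{(1)}(t),v^{(1)}(t))$ for all $t\ge 0$. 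Setting $U=u^{(1)}-u^{(2)}\ge 0$ and $V=v^{(2)}-v^{(1)}\ge 0$, a routine subtraction and regrouping yield the linear non-autonomous system
\begin{equation*}
\begin{cases}
U_i'=\mu_u\sum_{j=1}^n(a_{ij}U_j-a_{ji}U_i)+\beta_i(t)U_i+u^{(1)}_i(t)V_i,\\
V_i'=\mu_v\sum_{j=1}^n(a_{ij}V_j-a_{ji}V_i)+\gamma_i(t)V_i+v^{(2)}_i(t)U_i,
\end{cases}
\end{equation*}
for $i=1,\dots,n$, with continuous bounded coefficients $\beta_i,\gamma_i$ depending on the four original solutions. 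This is a linear cooperative system on $\mathbb{R}^{2n}_+$ with irreducible dispersal blocks and nonnegative cross-couplings.

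To promote $(U(t),V(t))\ge 0$ to $(U(t),V(t))\gg 0$ for each $t>0$, I would first note that the nontriviality of $u^{(2)}_0$ and $v^{(1)}_0$, combined with the single-species cooperative structure driven by the irreducible matrix $A$, forces $u^{(1)}(t), u^{(2)}(t), v^{(1)}(t), v^{(2)}(t)$ to be strictly positive in every component for all $t>0$; in particular the prefactors $u^{(1)}_i(t)$ and $v^{(2)}_i(t)$ in the $(U,V)$-system are strictly positive on $(0,\infty)$. Consequently the $2n\times 2n$ coefficient matrix of the $(U,V)$-system is itself irreducible for each $t>0$, and a strong-maximum-principle / Perron-Frobenius argument for linear cooperative irreducible systems propagates any initial nonzero nonnegativity throughout the whole network, yielding $U(t)\gg 0$ and $V(t)\gg 0$ for all $t>0$, which is precisely $(u^{(2)}(t),v^{(2)}(t))\ll_K(u^{(1)}(t),v^{(1)}(t))$. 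The main obstacle I anticipate is exactly this last propagation step: one has to exploit the irreducibility of $A$ not only to make each single-species block strongly positive, but also to chain together strict inequalities through the cross-couplings $u^{(1)}_i V_i$ and $v^{(2)}_i U_i$, so that a strict inequality initially present in a single coordinate of $U$ or $V$ eventually forces strict inequality in every coordinate of both.
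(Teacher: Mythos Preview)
Your proposal is correct and follows essentially the same route as the paper: form the difference pair $(U,V)=(u^{(1)}-u^{(2)},\,v^{(2)}-v^{(1)})$, observe it satisfies a linear cooperative system whose $2n\times 2n$ coefficient matrix is irreducible for $t>0$ (thanks to the irreducibility of $A$ and the strict positivity of the cross-coupling prefactors), and invoke the strong positivity of such systems to upgrade $(U,V)\ge 0$, $(U,V)\not\equiv 0$ to $(U,V)\gg 0$.

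The only cosmetic difference is the splitting of the bilinear term $u_iv_i$: the paper writes the cross-couplings with prefactors $u^{(2)}_i$ and $v^{(1)}_i$, whereas you use $u^{(1)}_i$ and $v^{(2)}_i$. Both are legitimate, but note that the paper's choice matches the stated hypotheses directly (positivity of $u^{(2)}(t),v^{(1)}(t)$ for $t>0$ follows immediately from $u^{(2)}_0\ge(\not\equiv)0$, $v^{(1)}_0\ge(\not\equiv)0$), while yours requires the extra observation---which you do make---that $u^{(1)}_0\ge u^{(2)}_0$ and $v^{(2)}_0\ge v^{(1)}_0$ force $u^{(1)}_0,v^{(2)}_0$ to be nontrivial as well.
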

\begin{proof}
Since $u^{(2)}(t)$ is the solution of
\begin{equation*}
\begin{cases}
u_i'=u_i(p_i-u_i-v_i)+\mu_u\ds\sum_{j=1}^{n}(a_{ij}u_j-a_{ji}u_i), &i=1,2,\dots,n,\\
u(0)=u_0^{(2)}\ge (\not\equiv)\, 0,
\end{cases}
\end{equation*}
and $A$ is quasi-positive and irreducible, we have $u^{(2)}(t)\gg0$  for all $t>0$ (see \cite{smith2008monotone}).  Similarly, $v^{(1)}(t)\gg0$  for all $t>0$.

Let $\overline u(t)= {u}^{(1)}(t)-{u}^{(2)}(t)$, ${\overline v}(t)= {v}^{(2)}(t)-{v}^{(1)}(t)$, ${\overline u}_0= {u}^{(1)}_0-{u}^{(2)}_0$ and ${\overline v}_0= {v}^{(2)}_0-{v}^{(1)}_0$.
Then $({\overline u}(t),{\overline v}(t))$ satisfies
\begin{equation}\label{com}
\begin{cases}
\ds\overline u_i=\mu_u\sum_{j=1}^n a_{ij}\overline u_j+\overline u_i\left(p_i-u_i^{(1)}-u_i^{(2)}-v_i^{(1)}\right)+u_i^{(2)}\overline v_i,\\
\ds\overline v_i=\mu_v\sum_{j=1}^na_{ij}\overline v_j+\overline v_i\left(p_i-v_i^{(1)}-v_i^{(2)}-u_i^{(2)}\right)+v_i^{(1)}\overline u_i,\\
({\overline u}(0), {\overline v}(0))\ge(\not\equiv)\,{0}.
\end{cases}
\end{equation}
Since $u_i^{(2)}, v_i^{(1)}>0$ for all $t>0$, $i=1, 2, ..., n$ and $A$ is quasi-positive and irreducible, \eqref{com} is cooperative and irreducible \cite{smith2008monotone}. It then follows  that
$\overline u_i(t),\overline v_i(t)>0$ for any $i=1,2,\dots,n$ and $t>0$. This proves the claim.
\end{proof}

Since the solutions of system \eqref{competition} induce a strictly monotone semiflow on  $\mathbb{R}^n_+\times \mathbb{R}^n_+$, we can use the theory of monotone dynamical systems in \cite{Hess, hsu1996competitive, lam2016remark, smith2008monotone}  to investigate the asymptotic behavior of \eqref{competition}. Specifically, if $({u^*}, 0)$ is the only semi-trivial equilibrium which is locally asymptotically stable, then it is globally asymptotically stable; if both $({u^*}, 0)$ and $(0,{v^*})$ exist with $({u^*}, 0)$ stable and $(0, {v^*})$ unstable and there exists no positive equilibrium, then $({u^*}, 0)$ is globally asymptotically stable. Then the following result follows from  Lemmas \ref{semitrival}-\ref{nonexistencep}:
\begin{theorem}\label{thm-competition}
Suppose that $M>0$, $\mu_u<\mu_v$, and $(p_1, p_2, ..., p_n)$ is not a multiple of $(\alpha_1,\alpha_2, ..., \alpha_n)$. Let $\mu_*$, ${u^*}$ be defined as in Lemma \ref{semitrival}.
Then the following statements hold:
\begin{enumerate}
    \item [{\rm (i)}] if $m>0$, then  semi-trivial equilibrium
    $({u^*}, 0)$ is globally asymptotically stable.
    \item [{\rm (ii)}] if $m<0$, then
    the trivial equilibrium is globally asymptotically stable for $\mu_u\ge \mu_*$; and the semi-trivial $({u^*}, 0)$  is globally asymptotically stable for $\mu_u<\mu_*$.
\end{enumerate}
\end{theorem}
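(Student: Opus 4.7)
The plan is to combine Lemmas \ref{semitrival} and \ref{nonexistencep} with a linearization argument at the semi-trivial equilibria, and then to invoke either the competitive strongly monotone dynamical systems dichotomy quoted immediately before the theorem or a comparison with the single-species equation from Theorem \ref{thm-single}. In every subcase the scheme is the same: use Lemma \ref{semitrival} to list the nonnegative equilibria, read off local stabilities from spectral bounds of block-triangular Jacobians by appealing to Theorem \ref{theorem_quasi0} (or Theorem \ref{theorem_quasi}), and then pass to global convergence via one of the two tools above.

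At $({u^*},0)$ the Jacobian of \eqref{competition} is block lower-triangular with diagonal blocks $\mu_u A+\text{diag}(p_i-2u_i^*)$ and $\mu_v A+\text{diag}(p_i-u_i^*)$. Equation \eqref{singleu} together with Perron-Frobenius gives $s(\mu_u A+\text{diag}(p_i-u_i^*))=0$. The first block has strictly smaller diagonal entries than $\mu_u A+\text{diag}(p_i-u_i^*)$ since $u_i^*>0$, so a standard comparison for irreducible quasi-positive matrices gives a strictly negative spectral bound. For the second block, I intend to use $\mu_v>\mu_u$ together with Theorem \ref{theorem_quasi0} to conclude $s(\mu_v A+\text{diag}(p_i-u_i^*))<0$, strictly, provided $(p_1-u_1^*,\ldots,p_n-u_n^*)$ is not a multiple of $(1,\ldots,1)$. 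An entirely parallel analysis at $(0,{v^*})$ will give $s(\mu_u A+\text{diag}(p_i-v_i^*))>0$, so $(0,{v^*})$ is unstable whenever it exists. The delicate input is the non-constancy of $(p_i-u_i^*)$, which I would prove by contradiction: if $p_i-u_i^*\equiv c$, then $u^*$ would be a positive right eigenvector of $\mu_u A$, but the column-sum-zero structure of $A$ forces the only positive right eigenvector to be proportional to $(\alpha_1,\ldots,\alpha_n)$ with eigenvalue $0$, giving $c=0$ and $u_i^*=p_i$, and hence making $(p_i)$ a multiple of $(\alpha_i)$ against the standing hypothesis. The same argument handles $(p_i-v_i^*)$.

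With local stabilities in place, case (i) follows by combining Lemma \ref{semitrival}(i) (both semi-trivial equilibria exist), the stability computations above, Lemma \ref{nonexistencep} (no positive equilibrium), and the Hsu-Smith-Waltman-type dichotomy for strictly monotone competitive semiflows: $({u^*},0)$ is globally asymptotically stable. For case (ii) I would split according to the position of $\mu_u,\mu_v$ relative to $\mu_*$. When $\mu_u\ge\mu_*$, Lemma \ref{semitrival}(ii) leaves only the trivial equilibrium, and dropping the nonpositive coupling term in each equation gives the comparisons $u_i(t)\le\hat u_i(t)$ and $v_i(t)\le\hat v_i(t)$, where $\hat u,\hat v$ solve the single-species problem with respective rates $\mu_u,\mu_v\ge\mu_*$; Theorem \ref{thm-single}(iii) then forces $\hat u,\hat v\to 0$, so $(0,0)$ is globally attracting, and local stability of $(0,0)$ follows from $s(\mu_u A+\text{diag}(p_i))\le 0$. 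When $\mu_u<\mu_*\le\mu_v$, only $({u^*},0)$ is semi-trivial; the comparison $v_i(t)\le\hat v_i(t)\to 0$ (Theorem \ref{thm-single}) combined with the theory of asymptotically autonomous semiflows as in the proof of Theorem \ref{thm-prey} yields $u_i(t)\to u_i^*$. Finally, when $\mu_u<\mu_v<\mu_*$, both semi-trivials exist and the argument reduces verbatim to case (i).

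The main obstacle is not a single estimate but the careful bookkeeping of the three subregimes in (ii) and, above all, the proof that the spectral non-degeneracy needed to apply the strict part of Theorem \ref{theorem_quasi0}/\ref{theorem_quasi} really does follow from the standing assumption that $(p_i)$ is not a multiple of $(\alpha_i)$, rather than from some stronger condition on the equilibrium profiles $u^*,v^*$.
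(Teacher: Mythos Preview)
Your proposal is correct and matches the paper's approach: linearize at the semi-trivial equilibria, control the two diagonal blocks via Perron--Frobenius and Theorem~\ref{theorem_quasi0}/\ref{theorem_quasi}, rule out positive equilibria by Lemma~\ref{nonexistencep}, and finish with the monotone competitive dichotomy. One harmless slip is that the Jacobian at $(u^*,0)$ is block \emph{upper}-triangular (the $\partial v'/\partial u$ block vanishes, not $\partial u'/\partial v$); more notably, your explicit verification that $(p_i-u_i^*)$ cannot be constant---deducing $u^*\in\ker A$ and hence $p=k\alpha$---actually supplies a detail the paper's own proof leaves implicit, and your comparison/asymptotically-autonomous treatment of the subregimes $\mu_u\ge\mu_*$ and $\mu_u<\mu_*\le\mu_v$ in (ii) is a legitimate minor variation on the paper's blanket appeal to the Hsu--Smith--Waltman theory quoted just before the theorem.
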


\begin{proof}
By Lemma  \ref{semitrival},  $({u^*},0)$ always exists. We show that  $({u^*}, 0)$ is locally asymptotically stable whenever it exists. Linearizing \eqref{competition} at $({u^*}, 0)$, we obtain the following eigenvalue problem
\begin{equation}\label{eigc}
\begin{cases}
\lambda \phi_i=\phi_i(p_i-2 u^*_i)-u_i^*\psi_i+\mu_u\ds\sum_{j=1}^{n}(a_{ij}\phi_j-a_{ji}\phi_i), &i=1,2,\dots,n,\\
\lambda \psi_i=\psi_i(p_i-u_i^*)+\mu_v\ds\sum_{j=1}^{n}(a_{ij}\psi_j-a_{ji}\psi_i),&i=1,2,\dots,n.
\end{cases}
\end{equation}
It suffices to show ${\rm Re}\lambda<0$ for any eigenvalue $\lambda$ of \eqref{eigc}. Let $(\phi, \psi)$ with $\phi=(\phi_1, \phi_2, ..., \phi_n)$ and $\psi=(\psi_1, \psi_2, ..., \psi_n)$ be an eigenvector corresponding to $\lambda$. If $\psi= 0,$ then $\lambda$ satisfies
$$
\lambda \phi_i=\phi_i(p_i-2 u^*_i)+\mu_u\ds\sum_{j=1}^{n}(a_{ij}\phi_j-a_{ji}\phi_i), \ \ i=1,2,\dots,n,
$$
i.e. $\lambda$ is an eigenvalue of $\mu_u A+\text{diag}(p_i-2u_i^*)$.
Since $A$ is quasi-positive and irreducible and ${u^*}$ satisfies
$$
0=u^*_i(p_i-u^*_i)+\mu_u\ds\sum_{j=1}^{n}(a_{ij}u^*_j-a_{ji}u^*_i), \ \ i=1,2,\dots,n,
$$
${u^*}$ is a positive eigenvector of $\mu_uA+\text{diag}(p_i-u_i^*)$ corresponding with principal eigenvalue $s(\mu_uA+\text{diag}(p_i-u_i^*))=0$.  Therefore, $s(\mu_u A+\text{diag}(p_i-2u_i^*))<s(\mu_uA+\text{diag}(p_i-u_i^*))=0$. 
It follows that
$$
{\rm Re}\lambda\le s(\mu_u A+\text{diag}(p_i-2u_i^*))<0.
$$
Therefore, we may assume $\psi\neq 0$. Then, $\lambda$ satisfies
$$
\lambda \psi_i=\psi_i(p_i-u_i^*)+\mu_v\ds\sum_{j=1}^{n}(a_{ij}\psi_j-a_{ji}\psi_i),\ \ i=1,2,\dots,n.
$$
i.e. $\lambda$ is an eigenvalue of $\mu_vA+\text{diag}(p_i-u_i^*)$.
Since  $s(\mu_u A+\text{diag}(p_i-u^*_i))=0$ and $\mu_u<\mu_v$,
 $s(\mu_v A+\text{diag}(p_i-u^*_i))<0$ by Theorems \ref{theorem_quasi0} or \ref{theorem_quasi}. Hence, ${\rm Re}\lambda<0$. This implies that $({u^*}, 0)$ is locally asymptotically stable.
 Similarly, we have  $s(\mu_v A+\text{diag}(p_i-v^*_i))>0$ and  $({0},{v^*})$ is unstable if it exists. By Lemma \ref{nonexistencep}, \eqref{competition} has no positive equilibrium. Therefore, the results follow from the theory of strictly monotone dynamical systems \cite{Hess, hsu1996competitive, lam2016remark, smith2008monotone}.
\end{proof}

\begin{remark}
\begin{enumerate}
\item For the reaction-diffusion Lotka-Volterra competition model, it was shown in \cite{dockery1998evolution}  that the species with slower diffusion rate out-competes the one with faster diffusion rate, when the two species are identical except for the diffusion rates. Theorem \ref{thm-competition}  is an analogous result for the patch model.
\item When $p=(p_1, p_2, ..., p_n)$ is a multiple of $(\alpha_1,\alpha_2, ..., \alpha_n)$, the nonexistence of positive equilibria in Lemma \ref{nonexistencep} no longer holds. Indeed it is easy to see that for any $s\in [0,1]$, $((1-s)p,sp)$ is a nonnegative equilibrium of \eqref{competition}. The fact that $(p_1, p_2, ..., p_n)$ is a multiple of $(\alpha_1,\alpha_2, ..., \alpha_n)$ implies that the movement strategy defined by $A=(a_{ij})$ is an ideal free dispersal strategy with respect to $(p_1, p_2, ..., p_n)$, and in \eqref{competition}, both species have ideal free dispersal strategies with respect to $(p_1, p_2, ..., p_n)$, hence coexistence can be achieved (see \cite{CCL2012}). Theorem \ref{thm-competition} shows that when neither species takes the ideal free strategy, the slower disperser will prevail.
\end{enumerate}
\end{remark}

\subsection{SIS epidemic model}
Finally, we consider an SIS (susceptible-infected-susceptible) epidemic model in a heterogeneous environment. Let $S_j(t)$ and $I_j(t)$ denote the number of the susceptible and infected individuals
in patch $j$ and at time $t$, respectively. The epidemic patch model proposed by \cite{Allenpatch} is the following:
\begin{equation}\label{patc1}
\begin{cases}
\ds\f{d S_j}{dt}=\mu_S\sum_{k\in\Omega} (a_{jk} S_k- a_{kj} S_j)-\ds\f{\beta_j S_j I_j}{S_j+ I_j}+\gamma_j I_j, &j\in\Omega,\\
\ds\f{d  I_j}{dt}=\mu_I\sum_{k\in\Omega}(  a_{jk} I_k- a_{kj} I_j)+\ds\f{\beta_j S_j I_j}{  S_j+ I_j}-\gamma_j  I_j,&j\in\Omega,
\end{cases}
\end{equation}
where $\Omega=\{1,2,\dots,n\}$ with $n\ge2$; $\mu_S,\mu_I>0$ are the dispersal rates of the susceptible and infected populations, respectively; $\beta_j\ge0$
denotes the rate of disease
transmission in patch $j$, and $\gamma_j>0$ represents the rate of disease recovery in patch $j$;
and $A$ is the same as in Section \ref{section5.3}. Summing up all the equations in \eqref{patc1}, one observes that  the total population remain constant:
\begin{equation}\label{total}
\sum_{j=1}^n (S_j(t)+I_j(t))=N, \ \ \text{for all} \ t\ge 0,
\end{equation}
where $N$ is the total population.

A major assumption in \cite{Allenpatch} is that the movement rates from patch $j$ to $i$ and from patch $i$ to $j$ are the same, i.e. $a_{ji}=a_{ij}$ for all $i\neq j$. Here we do not impose this assumption. The detailed discussion of model \eqref{patc1}-\eqref{total} with asymmetric $A$ will be in a forthcoming paper \cite{chen2019}. Here, we will  briefly discuss the application of Theorems \ref{theorem_quasi0} and \ref{theorem_quasi} to this model.

Model \eqref{patc1}-\eqref{total} has a unique disease free equilibrium (i.e. the disease component is zero) $(\hat S, 0)$ with $\hat S= \alpha N$, where $\alpha=(
\alpha_i)$ is the unique positive eigenvector of $A$ satisfying $\ds\sum_{i=1}^n \alpha_i=1$ corresponding with principal eigenvalue $0$. The basic reproduction number $R_0$, a threshold value for the model, is computed by the standardized process in \cite{Diekmann, Driessche}. Specifically, the new infection and transition matrices are respectively given by
\begin{equation}\label{FV}
F=\text{diag}(\beta_j),\;\;V=\mu_I A-\text{diag}(\gamma_j),
\end{equation}
where $V+F$ can be obtained by linearizing the model \eqref{patc1}-\eqref{total} around $(\hat S, 0)$. Since $V$ is strictly diagonally dominant, $-V$ is a non-singular $M$-matrix with $-V^{-1}$ being nonnegative. Therefore, the basic reproduction number
$$
R_0=r(-FV^{-1})
$$
is well-defined, which is the principal eigenvalue of $-FV^{-1}$ by the Perron-Frobenius theorem. Moreover, if we assume $\beta_j>0$ for all $j$, then $VF^{-1}=\mu_IAF^{-1}-Q$ with $Q=\text{diag}(\gamma_j/\beta_j)$. Since $AF^{-1}$ is quasi-positive and has each column summing up to zero, it follows from Theorem~\ref{theorem_quasi0} or Theorem~\ref{theorem_quasi} that $s(VF^{-1})=s(\mu_IAF^{-1}-Q)$ is strictly decreasing in $\mu_I$ provided that $Q$ is not a multiple of $I$. Therefore, we have the following result:
\begin{theorem}\label{theorem_sis}
Suppose that $\beta_j, \gamma_j>0$ for all $j$ and $A$ is an irreducible quasi-positive matrix.
Then, the following statements hold:
\begin{enumerate}
\item [{\rm (i)}] $R_0-1$ has the same sign as $s(V+F)=s\left(\mu_I A+diag(\beta_j-\gamma_j)\right)$; if $R_0<1$, then the disease-free equilibrium $(\hat S, 0)
$ of  \eqref{patc1}-\eqref{total} is locally asymptotically stable.
\item [{\rm (ii)}] $R_0$ is strictly decreasing in $\mu_I$ with
\vskip -8mm
$$
\lim_{\mu_I\rightarrow 0} R_0=\max_{1\le j\le n}\left\{\frac{\beta_j}{\gamma_j}\right\} \;\; \text{and} \;\; \lim_{\mu_I\to\infty} R_0=\ds\f{\ds\sum_{j=1}^n\alpha_j\beta_j}{\ds\sum_{j=1}^n \alpha_j\gamma_j},
$$
provided that $(\beta_j)$ is not a multiple of $(\gamma_j)$.
\end{enumerate}
\end{theorem}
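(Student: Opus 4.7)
For part (i), this is the standard next-generation threshold characterization of van den Driessche--Watmough. With $F$ and $V$ as in \eqref{FV}, the matrix $-V=\textrm{diag}(\gamma_j)-\mu_I A$ is strictly diagonally dominant and hence a nonsingular $M$-matrix, so their classical result yields that $R_0-1$ and $s(F+V)$ share the same sign. Since $F+V=\mu_I A+\textrm{diag}(\beta_j-\gamma_j)$, this gives the identification in (i). Local asymptotic stability of the disease-free equilibrium $(\hat S,0)$ when $R_0<1$ then follows by linearizing \eqref{patc1}--\eqref{total} along the invariant manifold $\sum_j(S_j+I_j)=N$: in suitable coordinates the Jacobian is block triangular, with the infected block equal to $F+V$, whose spectral bound is negative.

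For part (ii), the plan is to reduce the problem to the form handled by Theorems \ref{theorem_quasi0} and \ref{theorem_quasi} via the key identity
$$
s(VF^{-1}) = -\frac{1}{R_0}.
$$
To establish it, pick $x\gg 0$ with $-FV^{-1}x=R_0 x$ (the Perron eigenvector of the strictly positive matrix $-FV^{-1}$). Rearranging yields $V^{-1}x=-R_0\,F^{-1}x$; applying $V$ to both sides gives $VF^{-1}x=-(1/R_0)x$. Since $VF^{-1}$ is quasi-positive and irreducible, the existence of this positive eigenvector forces $s(VF^{-1})=-1/R_0$ by Perron--Frobenius.

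Now write $VF^{-1}=\mu_I(AF^{-1})-Q$. The matrix $AF^{-1}$ is quasi-positive and irreducible, with $s(AF^{-1})=0$ because all its column sums vanish (inherited from $A$). Applying Theorem \ref{theorem_quasi0} (or \ref{theorem_quasi}) with base matrix $AF^{-1}$ and diagonal perturbation $-Q$, $s(VF^{-1})$ is strictly decreasing in $\mu_I$ iff $Q$ is not a multiple of $I$, i.e.\ iff $(\beta_j)$ is not a multiple of $(\gamma_j)$. Since $s(VF^{-1})<0$ and $t\mapsto -1/t$ is strictly increasing on $(-\infty,0)$, the monotonicity transfers to $R_0$.

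The two limits are read off from Theorem \ref{theorem_quasi}(ii) applied to $s(\mu_I AF^{-1}-Q)$ and converted via $R_0=-1/s(VF^{-1})$. As $\mu_I\to 0$, $s(VF^{-1})\to\max_j(-\gamma_j/\beta_j)=-\min_j(\gamma_j/\beta_j)$, so $R_0\to\max_j(\beta_j/\gamma_j)$. As $\mu_I\to\infty$, Theorem \ref{theorem_quasi}(ii) gives $s(VF^{-1})\to\sum_j v_j(-\gamma_j/\beta_j)$ for an appropriate $v\gg 0$. Because the right Perron eigenvector of $AF^{-1}$ is proportional to $F\alpha$ (the relation $AF^{-1}y=0$ forces $F^{-1}y\propto\alpha$), tracing the similarity transformation in the proof of Theorem \ref{theorem_quasi} gives $v_j=\beta_j\alpha_j/\sum_k\beta_k\alpha_k$. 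Substituting yields $\lim_{\mu_I\to\infty} R_0=\sum_j\alpha_j\beta_j\,\big/\sum_j\alpha_j\gamma_j$, as claimed. The main subtlety throughout will be the Perron--Frobenius bookkeeping needed to establish $s(VF^{-1})=-1/R_0$ and to identify $v_j$ through the similarity used in the proof of Theorem \ref{theorem_quasi}; once these are in hand the monotonicity statement and both limits follow mechanically.
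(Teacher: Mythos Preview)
Your proposal is correct and follows essentially the same approach as the paper: reduce to $VF^{-1}=\mu_I(AF^{-1})-Q$, apply Theorems~\ref{theorem_quasi0}/\ref{theorem_quasi} to get strict monotonicity of $s(VF^{-1})$, and compute both limits via the Perron eigenvector $F\alpha$ of $AF^{-1}$. Your version is in fact more explicit than the paper's, since you spell out and justify the key identity $s(VF^{-1})=-1/R_0$ that the paper uses only implicitly in its final display.
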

\begin{proof}
(i) follows directly from the definition of $R_0$ and \cite{Diekmann, Driessche}. Since $s(VF^{-1})=s(\mu_IAF^{-1}-Q)$ is strictly decreasing in $\mu_I$, $s(-FV^{-1})$ is strictly decreasing in $\mu_I$ if $Q$ is not a multiple of $I$. Therefore, $R_0=r(-FV^{-1})=s(-FV^{-1})$ is strictly decreasing in $\mu_I$ if $(\beta_j)$ is not a multiple of $(\gamma_j)$. The limit of $R_0$ as $\mu_I\rightarrow 0$ is obvious. To see the limit of $R_0$ as $\mu_I\rightarrow \infty$, we notice that $F\alpha=(\alpha_1\beta_1, \alpha_2\beta_2, \dots, \alpha_n\beta_n)$ is a principal eigenvector of $AF^{-1}$, which can be normalized as $(\alpha_1\beta_1, \alpha_2\beta_2, \dots, \alpha_n\beta_n)/\sum_j \alpha_j\beta_j$. Since $F^{-1}$ has each column sum equaling zero, it follows from Theorem \ref{theorem_quasi} that
\vskip -2mm
$$
\lim_{\mu_I\rightarrow 0} s(\mu_IAF^{-1}-Q)= \ds\f{\ds\sum_{j=1}^n\alpha_j\gamma_j}{\ds\sum_{j=1}^n \alpha_j\beta_j}.
$$
As a consequence,
$$
\lim_{\mu_I\to\infty} R_0=-\frac{1}{\ds\lim_{\mu_I\rightarrow 0} s(\mu_IAF^{-1}-Q)}=\ds\f{\ds\sum_{j=1}^n\alpha_j\beta_j}{\ds\sum_{j=1}^n \alpha_j\gamma_j}.
$$\vskip -8mm
\end{proof}

\begin{remark}
In Theorem \ref{theorem_sis}, we have assumed that $\beta_j$ is positive for all $j$, which is not necessary. However, if we drop this assumption, the proof will be more technical. We will leave this to the forthcoming paper \cite{chen2019}.
\end{remark}

The monotonicity of $R_0$ with respect to $\mu_I$ has been addressed as an open problem in \cite{Allenpatch}. During the preparation of our current paper, we learned that this problem was independently solved in \cite{gao2019, gao2019fast}. The proof of monotonicity of $R_0$ with respect to $\mu_I$ in \cite{gao2019fast} uses Karlin's Theorem as well.

\hspace{0.5cm}
\begin{appendix}
\setcounter{equation}{0} \setcounter{theorem}{0}
\renewcommand{\thesection}{\Alph{section}}
\renewcommand{\theequation}{\arabic{equation}}
\setcounter{section}{1}
\noindent\textbf{Appendix: Notation from graph theory and Tree-Cycle idenitity}\vspace{10pt}

Let $A=(a_{ij})$ be a nonnegative $n\times n$ matrix. A {\it weighted digraph} $\mathcal{G}=\mathcal{G}_A$ associated with $A$ can be constructed as follows: $\mathcal{G}=(V, E)$ is a pair of two sets, a set $V=\{1,2,\ldots,n\}$ of vertices and a set $E$ of arcs $(i,j)$ with weight $a_{ij}$ leading from initial vertex $j$ to terminal vertex $i$. Specifically, $(i,j) \in E(\mathcal{G})$ if and only if $a_{ij}>0$.

A digraph is \textit{strongly connected} if, for any ordered pair of distinct vertices $i,j$, there exists a directed path from $i$ to $j$. A weighted digraph $\mathcal{G}_A$ is strongly connected if and only if the weight matrix $A$ is irreducible \cite{berman1979nonnegative}.

A subdigraph $\mathcal{H}$ of $\mathcal{G}$ is \textit{spanning} if $\mathcal{H}$ and $\mathcal{G}$ have the same vertex set. The \textit{weight} of a subdigraph $\mathcal{H}$ is the product of the weights of all its arcs.
A connected subdigraph $\mathcal{T}$ of $\mathcal{G}$ is a {\it rooted in-tree} if it contains no directed cycle, and there is one vertex, called the root, that is not an initial vertex of any arcs while each of the remaining vertices is an initial vertex of exactly one arc. A subdigraph $\mathcal{Q}$  of $\mathcal{G}$ is {\it unicyclic} if it is a disjoint union of rooted in-trees whose roots form
a directed cycle. Every vertex of unicyclic $\mathcal{Q}$ is an initial vertex of exactly one arc, and thus a unicyclic graph has also been called a functional digraph \cite[page 201]{harary}.

Notice that our definitions of rooted in-trees and unicyclic graphs (functional digraphs) above are different as those in \cite{LiShuai}. Specifically, rooted out-trees and contra-function digraphs (a disjoint union of rooted out-trees whose roots form a directed cycle) are considered in \cite{LiShuai}, respectively. As a consequence, a slightly different version of Tree-Cycle identity, in analog to Theorem~2.2 in \cite{LiShuai}, can be established using Kirchhoff's Matrix-Tree Theorem \cite{Moon}.

\begin{theorem}[Tree-Cycle identity]
\label{TC}Let  $\mathcal{G}_A$  be a strongly connected weighted digraph. Let $L=(\ell_{ij})$ be the Laplacian matrix of $\mathcal{G}_A$; that is, $\ell_{ij}=-a_{ij}$ for $i\neq j$ and $\ell_{ii}=\sum_{k\not = i}a_{ki}$. Let $(\alpha_1, \alpha_2, ..., \alpha_n)^T$ be a positive, normalized principal right eigenvector of~$L$.  Then the following identity holds:
\begin{equation*}
\sum_{i, j=1}\alpha_i a_{ji}F_{ji}(x_j, x_i)= \sum_{\mathcal{Q} \in \mathbb{Q}} w(\mathcal{Q}) \sum_{(s,r)\in E(\mathcal{C}_{\mathcal{Q}})} F_{sr}(x_s, x_r),
\end{equation*}
where $\mathbb{Q}$ is the set of all spanning unicycle graphs of $(\mathcal{G}, {A})$, $w(\mathcal{Q})>0$ is the weight of $\mathcal{Q}$, and $\mathcal{C}_\mathcal{Q}$ denotes the directed cycle of $\mathcal{Q}$ with arc set $E(\mathcal{C}_{\mathcal{Q}})$.
\end{theorem}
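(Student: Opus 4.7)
The plan is to establish the identity via a combinatorial rearrangement based on Kirchhoff's Matrix-Tree Theorem for weighted digraphs. First I would obtain an explicit formula for the positive normalized right eigenvector $(\alpha_1,\ldots,\alpha_n)^T$ of $L$. Because $L$ has zero column sums and $\mathcal{G}_A$ is strongly connected, $0$ is a simple eigenvalue of $L$, and Kirchhoff's theorem identifies each cofactor $\det L_{ii}$ (obtained by deleting the $i$-th row and column of $L$) with the weight $\sum_{\mathcal{T}\in\mathbb{T}_i} w(\mathcal{T})$, where $\mathbb{T}_i$ is the set of spanning in-trees of $\mathcal{G}_A$ rooted at $i$. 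All such cofactors are proportional to the entries of a null right eigenvector, so $\alpha_i = c\sum_{\mathcal{T}\in\mathbb{T}_i} w(\mathcal{T})$ for a single positive normalization constant $c$.

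Substituting this into the left-hand side yields
\begin{equation*}
\sum_{i,j=1}^n \alpha_i\, a_{ji}\, F_{ji}(x_j,x_i) = c\sum_{i,j=1}^n \sum_{\mathcal{T}\in\mathbb{T}_i} w(\mathcal{T})\, a_{ji}\, F_{ji}(x_j,x_i),
\end{equation*}
where only terms with $a_{ji}>0$, i.e. those corresponding to an arc $(j,i)\in E(\mathcal{G}_A)$, contribute. For each such term, the product $w(\mathcal{T})\cdot a_{ji}$ is the weight of the spanning subdigraph $\mathcal{T}\cup\{(j,i)\}$. In an in-tree $\mathcal{T}\in\mathbb{T}_i$ the root $i$ is the unique vertex with no outgoing arc, while every other vertex is an initial vertex of exactly one arc; adjoining $(j,i)$ (which has $i$ as its initial vertex) restores out-degree one at $i$, so the resulting subdigraph is a functional digraph, i.e. a spanning unicyclic graph $\mathcal{Q}\in\mathbb{Q}$ whose directed cycle necessarily contains the newly added arc $(j,i)$.

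The core step is to invert this construction: I claim the assignment $(\mathcal{T},(j,i))\mapsto \mathcal{T}\cup\{(j,i)\}$ sets up a bijection between pairs $(\mathcal{T},(j,i))$ with $\mathcal{T}\in\mathbb{T}_i$ and pairs $(\mathcal{Q},(s,r))$ with $\mathcal{Q}\in\mathbb{Q}$ and $(s,r)\in E(\mathcal{C}_\mathcal{Q})$. For the inverse, given $\mathcal{Q}\in\mathbb{Q}$ and any arc $(s,r)\in E(\mathcal{C}_\mathcal{Q})$, I would remove $(s,r)$ and verify that the resulting subdigraph has no directed cycle (the unique cycle of $\mathcal{Q}$ has been broken), has out-degree zero at $r$ and out-degree one at every other vertex, and remains connected as an undirected graph, so it is precisely a spanning in-tree rooted at $r$. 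Combining the bijection with the factorization $w(\mathcal{Q})=w(\mathcal{T}_r)\cdot a_{sr}$ and interchanging the order of summation gives
\begin{equation*}
\sum_{i,j=1}^n \sum_{\mathcal{T}\in\mathbb{T}_i} w(\mathcal{T})\, a_{ji}\, F_{ji}(x_j,x_i) = \sum_{\mathcal{Q}\in\mathbb{Q}} w(\mathcal{Q})\sum_{(s,r)\in E(\mathcal{C}_\mathcal{Q})} F_{sr}(x_s,x_r),
\end{equation*}
and absorbing the overall factor $c$ into the weights $w(\mathcal{Q})$ via the chosen normalization completes the proof.

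The main obstacle is confirming the direction conventions so that the Matrix-Tree Theorem delivers in-trees (rather than out-trees) as the combinatorial interpretation of $\alpha_i$. The paper's Laplacian has diagonal $\ell_{ii}=\sum_{k\neq i}a_{ki}$ encoding out-degree and off-diagonal $\ell_{ij}=-a_{ij}$ encoding incoming arc weights, so columns sum to zero and $L\alpha=0$. Matching this signature against the standard statement of the Matrix-Tree Theorem (which comes in two flavors, one for in-trees and one for out-trees) requires care, but once this alignment is pinned down the bijective identification and interchange of summation are essentially mechanical.
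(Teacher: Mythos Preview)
Your proposal is correct and follows essentially the same route as the paper: use Kirchhoff's Matrix-Tree Theorem to write $\alpha_i$ (up to the normalization constant) as the cofactor $C_{ii}=\sum_{\mathcal{T}\in\mathbb{T}_i}w(\mathcal{T})$ over spanning in-trees rooted at $i$, then observe that adjoining the arc $(j,i)$ to such a tree produces a spanning unicyclic graph and that this gives a bijection between pairs $(\mathcal{T},(j,i))$ and pairs $(\mathcal{Q},(s,r))$ with $(s,r)\in E(\mathcal{C}_\mathcal{Q})$. The paper's own proof sketch is terser---it states the Matrix-Tree interpretation of $\alpha_i$ and then defers the bijective rearrangement to \cite[Theorem~2.2]{LiShuai}---so your write-up actually spells out more of the combinatorics, and your flagged concern about matching the in-tree/out-tree convention to the Laplacian's column-sum-zero structure is exactly the point the paper addresses by distinguishing its setup from that of \cite{LiShuai}.
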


Since $\mathcal{G}_A$ is strongly connected, equivalently, $A$ is irreducible, $0$ is a simple eigenvalue of $L$.  Let $(\alpha_1, \alpha_2, ..., \alpha_n)^T$ be a positive, normalized principal right eigenvector of $L$. It follows from Kirchhoff's Matrix-Tree Theorem that $\alpha_i = \ds\frac{C_{ii}}{\sum_{k=1}^n C_{kk}}$. Here $C_{ii}$ is the cofactor of the $i$-th diagonal entry of $L$ and can also be interpreted as  $C_{ii}=\ds\sum_{\mathcal{T}\in\mathbb{T}_i} w(\mathcal{T})$ where $\mathbb{T}_i$ is the set of all spanning in-trees rooted at $i$. Therefore, each term in the product $\alpha_i a_{ji}$ corresponds to a unicylic graph that is formed by adding arc $(j,i)$ from $i$ to $j$ to a spanning in-tree rooted at $i$. So, the same argument as in the proof of \cite[Theorem~2.2]{LiShuai} can be applied to establish Theorem~\ref{TC}, and thus is omitted.

\end{appendix}

\end{document}